\newtheorem{theorem}{Theorem}
\newtheorem{lemma}[theorem]{Lemma}
\newtheorem{example}{Example}
\def\qi#1 {\fbox {\footnote {\ }}\ \footnotetext { From Qi: {\color{red}#1}}}
\begin{document}
\title[Infinite families of $2$-designs from linear codes]{Infinite families of $2$-designs from two classes of linear codes}

\author{Xiaoni Du}
\address{College of Mathematics and Statistics, Northwest Normal University, Lanzhou, Gansu 730070, China}
\curraddr{}
\email{ymldxn@126.com}
\author{Rong Wang}
\address{College of Mathematics and Statistics, Northwest Normal University, Lanzhou, Gansu 730070, China}
\curraddr{}
\email{rongw113@126.com}
\author{Chunming Tang}
\address{School of Mathematics and Information, China West Normal University, Nanchong, Sichuan 637002, China}
\curraddr{}
\email{tangchunmingmath@163.com}
\author{Qi Wang}
\address{Department of Computer Science and Engineering, Southern University of Science and Technology, Shenzhen, Guangdong 518055, China}
\curraddr{}
\email{wangqi@sustech.edu.cn}
%\thanks{Qi Wang's research was supported by the National Natural Science Foundation of China under Grant No. 11601220.}

\thanks{}
\date{\today}

\maketitle

\begin{abstract}
The interplay between coding theory and $t$-designs has attracted a lot of attention for both directions. It is well known that the supports of all codewords with a fixed weight in a code may hold a $t$-design. In this paper, by determining the weight distributions of two classes of linear codes, we derive infinite families of $2$-designs from the supports of codewords with a fixed weight in these codes, and explicitly obtain their parameters.

\noindent\textbf{Keywords:} Affine-invariant code, cyclic code, exponential sum, linear code, weight distribution, $2$-design
\end{abstract}

\section{Introduction}
Throughout this paper, let $p$ be an odd prime and $m$ be a positive integer. Let $\mathbb{F}_q$ denote the finite field with $q=p^m$ elements and $\mathbb{F}^*_q=\mathbb{F}_q\backslash\{0\}$. An $[n,k,d]$  {\em linear code} $\mathcal{C}$ over $\mathbb{F}_p$ is a $k$-dimensional subspace of $\mathbb{F}_p^n$ with minimum Hamming distance $d$, and is called a {\em cyclic code} if each codeword $(c_0,c_1,\ldots,c_{n-1})\in\mathcal{C}$ implies $(c_{n-1}, c_0, c_1, \ldots,
c_{n-2}) \in \mathcal{C}$. Any cyclic code $\mathcal{C}$ can be expressed as $\mathcal{C} = \langle g(x) \rangle$, where $g(x)$ is monic and has the least degree. The polynomial  $g(x)$ is called the  {\em generator polynomial} and $h(x)=(x^n-1)/g(x)$ is referred to as the {\em parity-check polynomial} of $\mathcal{C}$. The code, whose generator polynomial is $x^kh(x^{-1})/h(0)$, is called the {\em dual} of $\mathcal{C}$ and denoted by $\mathcal{C}^{\bot}$. Note that $\mathcal{C}^{\bot}$ is
an $[n,n-k]$ code. Furthermore, we define the  {\em extended code} $\overline{\mathcal{C}}$ of $\mathcal{C}$ to be the code
$$
\overline{\mathcal{C}}=\{(c_0, c_1, \ldots, c_n) \in \mathbb{F}_p^{n+1}: (c_0, c_1, \ldots, c_{n-1}) \in \mathcal{C} ~\mathrm{with} ~\sum^n_{i=0}c_i=0\}.
$$

Let $A_i$ be the number of codewords with Hamming weight $i$ in a code $\mathcal{C}$. The {\em weight enumerator} of $\mathcal{C}$ is defined by
$$1+A_1z+A_2z^2+\ldots+A_nz^n,$$
and the sequence $(1, A_1, \ldots, A_n)$ is called the {\em weight distribution} of the code $\mathcal{C}.$ If the number of nonzero $A_i$'s with $1 \leq i \leq n$ is exactly $w$, then we call $\mathcal{C}$ a {\em $w$-weight code}. Let $\mathbf{c}=(c_0, c_1, \ldots, c_{n-1})$ be a codeword in the code $\mathcal{C}.$ The {\em support} of $\mathbf{c}$ is defined by
$$Suppt(\mathbf{c})=\{0\leq i \leq n-1: c_i\neq 0\}\subseteq \{0, 1, \ldots, n-1\}.$$

Let $\mathcal{P}$ be a set of $v \geq1$ elements and $\mathcal{B}$ be a set of $k$-subsets of $\mathcal{P},$ where $k$ is a positive integer with $1\leq k \leq v$, and the size of $\mathcal{B}$ is denoted by $b$. Let $t$ be a positive integer with $t\leq k.$ If every $t$-subset of $\mathcal{P}$ is contained in exactly $\lambda$ elements of $\mathcal{B},$ then we call the pair $\mathbb{D}=(\mathcal{P}, \mathcal{B})$ a $t$-$(v,k,\lambda)$ design, or simply a {\em
$t$-design}. The elements of $\mathcal{P}$ are called {\em points}, and those of $\mathcal{B}$ are referred to as {\em blocks}. A $t$-design is {\em simple} when there is no repeated blocks in $\mathcal{B}$. A $t$-design is called {\em symmetric} if $v=b$ and {\em trivial} if $k=t$ or $k=v$. Hereafter, we restrict attention to simple $t$-designs with $t < k < v$. When $t\geq
2$ and $\lambda=1,$ we call the $t$-$(v, k, \lambda)$ design a  {\em Steiner system}. With simple counting argument, we have the following identity, which restricts the parameters of a $t$-$(v,k,\lambda)$ design.
\begin{equation}\label{condition}
  b {k \choose t} = \lambda {v \choose t}.
\end{equation}

%By definition, a $t$-$(v,k,\lambda)$ design is naturally an $i$-$(v,k,\lambda_i)$ design for any $0\leq i< t,$ where the parameter $\lambda_i$ can be determined by the following formula
%\begin{eqnarray}\label{condition}
%\lambda_i=\frac{\lambda{{v-i}\choose {t-i}}}{ {{k-i}\choose {t-i}}}.
%\end{eqnarray}

Combinatorial $t$-designs have found important applications in coding theory, cryptography, communications and statistics.  The interplay between codes and $t$-designs is two-fold: On  one hand, a linear code over any {\em finite field} can be derived from the incidence matrix of a $t$-design and much progress has been made (see for example~\cite{AK92,DD15,Ton98,Ton07}). On the other hand, linear and nonlinear codes both may hold $t$-designs. For each $i$ with
$A_i\neq 0$, let $\mathcal{B}_i$ define the set of the supports of all codewords with weight $i$ in a code $\mathcal{C}$, where the coordinates of a codeword are indexed by $(0, 1, 2, \ldots, n-1)$. Let $\mathcal{P}=\{0, 1, \ldots, n-1\}$. Then the pair $(\mathcal{P},\mathcal{B}_i)$ might be a $t$-$(v,i,\lambda)$ design, where the parameter $\lambda$ can be accordingly determined using (\ref{condition}). In the literature some codes were used to
construct $2$-designs and $3$-designs~\cite{AK92,Ding15b,KP95,KP03,Ton98,Ton07}. Very recently, infinite families of $2$-designs and $3$-designs were obtained from several different classes of linear codes by Ding and Li~\cite{DL17}, and Ding~\cite{Ding182}. Some other constructions of $t$-designs can be found in~\cite{BJL99,CM06,MS771,RR10}.

Generally, if $\mathcal{C}$ is a cyclic code, the weight of each codeword can be expressed by certain exponential sums so that the weight distribution of $\mathcal{C}$ can be determined when the exponential sums could be computed explicitly (see~\cite{FY05,Vlugt95} and the references therein). Using this method, Feng and Luo~\cite{FL08} derived the weight distribution of cyclic code $\mathcal{C}_1$ with
length $n=q-1$ and parity-check polynomial $h_1(x)h_2(x)h_3(x),$ where $h_1(x),h_2(x)$ and $h_3(x)$ are the minimal polynomials of $\alpha,$ $\alpha^2$ and $\alpha^{p^l+1}$($l\geq1$ and gcd$(m, l)=1)$ over $\mathbb{F}_p,$ respectively,  and $\alpha$ is a primitive element of $\mathbb{F}_q$.

The main objective of this paper is to obtain $2$-designs from the following  two classes of linear codes ${\overline{{\mathcal{C}_1}^{\bot}}}^{\bot}$ and ${\overline{{\mathcal{C}_2}^{\bot}}}^{\bot}$.

\begin{eqnarray}\label{code-1}
{\overline{{\mathcal{C}_1}^{\bot}}}^{\bot}:=\{ (Tr (ax^{p^l+1}+bx^2+cx)_{x\in \mathbb{F}_q}+h ): a,b,c \in \mathbb{F}_q, h\in \mathbb{F}_p \}
\end{eqnarray}
and
\begin{eqnarray}\label{code-2}
{\overline{{\mathcal{C}_2}^{\bot}}}^{\bot}:=\{ (Tr (ax^{p^l+1}+bx)_{x\in \mathbb{F}_q}+h ): a,b \in \mathbb{F}_q, h\in \mathbb{F}_p \},
\end{eqnarray}
where  $Tr$ denotes the trace function from $\mathbb{F}_q$ onto $\mathbb{F}_p$, and $\mathcal{C}_2$ is the cyclic code with length $n$ and parity-check polynomial $h_1(x)h_3(x).$

The remainder of this paper is organized as follows. In Section \ref{section-2}, we introduce some notation and preliminary results on exponential sums, cyclotomic fields, affine invariant codes, which will be used in subsequent sections. In Section \ref{theorem}, we determine the weight distributions of two classes of linear codes by explicitly computing certain exponential sums. In Section \ref{section-4}, we then derive infinite families of $2$-designs and calculate their parameters from the two classes of codes in Section \ref{theorem}. Section \ref{section-5} concludes the paper.

\section{Preliminaries}\label{section-2}
In this section, we summarize some standard notation and basic facts on affine-invariant codes, exponential sums and cyclotomic fields.

\subsection{Some notation}
For convenience, we adopt the following notation unless otherwise stated in this paper.
\begin{itemize}
  \item  $p^*=(-1)^{\frac{p-1}{2}}p$.
\item  $m,l$ are  positive integers with $gcd(m, l)=1$, $q=p^m$.
\item $\mathcal{P}=\{0, 1, \ldots, n\}$ and   $n=p^m-1$.
\item $\zeta_p=e^{2\pi i/p}$ is a primitive $p$-th root of unity, where  $i=\sqrt{-1}$.
\item  $\eta$ and $\eta'$ are the quadratic characters of $\mathbb{F}_q^*$ and $\mathbb{F}_p^*,$ respectively. We extend these quadratic characters by setting $\eta(0)=0$ and $\eta'(0)=0$.
\item $Tr$ denotes the trace function from $\mathbb{F}_q$ onto $\mathbb{F}_p.$
%\item ${\overline{{\mathcal{C}_1}^{\bot}}}^{\bot}:=\{ (Tr (ax^{p^l+1}+bx^2+cx)_{x\in \mathbb{F}_q}+h ): a,b,c \in \mathbb{F}_q, h\in \mathbb{F}_p \}.$
%\item ${\overline{{\mathcal{C}_2}^{\bot}}}^{\bot}:=\{ (Tr (ax^{p^l+1}+bx)_{x\in \mathbb{F}_q}+h ): a,b \in \mathbb{F}_q, h\in \mathbb{F}_p \}.$
\end{itemize}

\subsection{Affine-invariant codes and 2-designs}

We begin this subsection by the introduction of affine-invariant codes since the two classes of linear codes we investigate are both affine-invariant and will be proved to hold $2$-designs.

The set of coordinate permutations that map a code $\mathcal{C}$ to itself forms a group, which is referred to as the permutation automorphism group of $\mathcal{C}$ and denoted by $PAut(\mathcal{C})$. We define the affine group $GA_1(\mathbb{F}_q)$ by the set of all permutations
$$\sigma_{a,b}: x\mapsto  ax+b $$
of $\mathbb{F}_q$, where $a \in \mathbb{F}_q^*$ and  $b \in \mathbb{F}_q.$
An {\em affine-invariant} code is an extended cyclic code $\overline {\mathcal{C}}$ over $\mathbb{F}_p$ such that $GA_1(\mathbb{F}_q)\subseteq PAut(\overline{\mathcal{C}})$~\cite{HP03}.

The $p$-adic expansion of each $s\in\mathcal{P}$ is given by
$$
s=\sum^{m-1}_{i=0}s_ip^i,~ ~0\leq s_i\leq p-1 ,~0\leq i \leq m-1.
$$
For any $r=\sum^{m-1}_{i=0}r_ip^i \in\mathcal{P}$,  we say that $r\preceq s$ if $r_i \leq s_i$ for all $0\leq i\leq m-1$. Clearly, we have $r \le s $ if
$r\preceq s$.

For any integer $0\leq j< n$, the {\em $p$-cyclotomic coset} of $j$ modulo $n$ is defined by
$$C_j=\{jp^i \pmod n: 0 \le i \le \ell_j-1\}$$
where $\ell_j$ is the smallest positive integer such that $j\equiv jp^{\ell_j}\pmod n.$ The smallest integer in $C_j$ is called the {\em coset leader} of $C_j$. Let $g(x)=\prod_j\prod_{i\in C_j}(x-\alpha^i)$, where $j$ runs through some coset leaders of the $p$-cyclotomic cosets $C_j$ modulo $n.$  The set   $T=\bigcup_jC_j$ is referred to as the {\em defining set} of $\mathcal{C}$, which is the union of these $p$-cyclotomic cosets.
%The root of unity $\mathcal{Z}=\{\alpha^i:i\in T\}$ are called the zeros of the cyclic code $\mathcal{C}$ and $\{\alpha^i:i\notin T\}$ are the nonzeros of $\mathcal{C}$.

For certain applications, it is important to know whether a given extended primitive cyclic code $\overline{\mathcal{C}}$ is affine-invariant or not. The following lemma given by Kasami, Lin and Peterson~\cite{KLP67} answers the question by examining the defining set of the code.

\begin{lemma}~\cite{KLP67}\label{Kasami-Lin-Peterson}
  Let $\overline{\mathcal{C}}$ be an extended cyclic code of
  length $p^m$ over $\mathbb{F}_p$ with defining set $\overline{T}$. The code $\overline{\mathcal{C}}$ is affine-invariant if and
only if whenever $s \in   \overline{T}$  then $r \in \overline{T}$ for all $r \in \mathcal{P}$  with $r \preceq s$.
\end{lemma}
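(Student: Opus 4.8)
The plan is to coordinatize $\overline{\mathcal{C}}$ by the field $\mathbb{F}_q$ itself — identifying the coordinate of $\mathcal{C}$ labelled $i$ with $\alpha^i\in\mathbb{F}_q^*$ and the extended coordinate with $0\in\mathbb{F}_q$ — so that a codeword becomes a vector $\mathbf{c}=(c_x)_{x\in\mathbb{F}_q}\in\mathbb{F}_p^q$ and $GA_1(\mathbb{F}_q)$ acts on it by permuting coordinates through the maps $\sigma_{a,b}$. In this picture, membership in $\overline{\mathcal{C}}$ is read off from the power sums $A_s(\mathbf{c}):=\sum_{x\in\mathbb{F}_q}c_x x^s$ (with the convention $0^0=1$): one has $\mathbf{c}\in\overline{\mathcal{C}}$ iff $A_s(\mathbf{c})=0$ for every $s\in\overline{T}$, and, by the standard correspondence between extended cyclic codes and their defining sets, $\overline{T}$ is \emph{exactly} the set of exponents $s\in\mathcal{P}$ for which $A_s$ vanishes identically on $\overline{\mathcal{C}}$ (note in particular that $0\in\overline{T}$, coming from the parity check $\sum_x c_x=0$). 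Since the cyclic shift of $\mathcal{C}$ corresponds to $x\mapsto\alpha x$ and fixes the added coordinate $0$, the code $\overline{\mathcal{C}}$ is automatically invariant under every scaling $x\mapsto ax$ with $a\in\mathbb{F}_q^*$; as $GA_1(\mathbb{F}_q)$ is generated by the scalings together with the translations $\tau_b\colon x\mapsto x+b$, the whole question reduces to: for which $\overline{T}$ is $\overline{\mathcal{C}}$ invariant under all translations?

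The computational heart will be a single identity. For $\mathbf{c}\in\mathbb{F}_p^q$, $b\in\mathbb{F}_q$ and $s\in\mathcal{P}$, applying $\tau_b$ to the coordinates and expanding $(x+b)^s$ by the binomial theorem in $\mathbb{F}_q$ yields
\[
A_s(\tau_b\mathbf{c})=\sum_{r=0}^{s}\binom{s}{r} b^{s-r} A_r(\mathbf{c}),
\]
where, by Lucas' theorem, $\binom{s}{r}\not\equiv 0\pmod p$ exactly when $r\preceq s$, so only the terms with $r\preceq s$ contribute. Consequently $b\mapsto A_s(\tau_b\mathbf{c})$ is a polynomial in $b$ of degree at most $s\le n<q$ whose coefficient of $b^{s-r}$ equals $\binom{s}{r}A_r(\mathbf{c})$ for each $r\preceq s$ (the exponents $s-r$ being pairwise distinct).

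With this in hand the two implications are short. For ($\Leftarrow$), assume $\overline{T}$ is downward closed under $\preceq$; if $\mathbf{c}\in\overline{\mathcal{C}}$ and $s\in\overline{T}$, then every $r\preceq s$ lies in $\overline{T}$, so $A_r(\mathbf{c})=0$, and the identity forces $A_s(\tau_b\mathbf{c})=0$ for all $b$, i.e., $\tau_b\mathbf{c}\in\overline{\mathcal{C}}$; hence all translations, and therefore all of $GA_1(\mathbb{F}_q)$, lie in $PAut(\overline{\mathcal{C}})$. For ($\Rightarrow$), I would argue by contraposition: if some $s\in\overline{T}$ admits $r\preceq s$ with $r\notin\overline{T}$, then by the description of $\overline{T}$ above there is a codeword $\mathbf{c}\in\overline{\mathcal{C}}$ with $A_r(\mathbf{c})\neq 0$, so $b\mapsto A_s(\tau_b\mathbf{c})$ is a nonzero polynomial over $\mathbb{F}_q$ of degree $<q$ (its $b^{s-r}$-coefficient is $\binom{s}{r}A_r(\mathbf{c})\neq 0$), and hence has a non-root $b\in\mathbb{F}_q$; for this $b$ we get $\tau_b\mathbf{c}\notin\overline{\mathcal{C}}$, so $\tau_b\notin PAut(\overline{\mathcal{C}})$ and $\overline{\mathcal{C}}$ is not affine-invariant.

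The step I expect to require the most care is the \emph{maximality} of the defining set used in ($\Rightarrow$) — that $\overline{T}$ really equals $\{s\in\mathcal{P}:A_s\equiv 0 \text{ on } \overline{\mathcal{C}}\}$, so that $r\notin\overline{T}$ genuinely produces a codeword with $A_r\neq 0$. This rests on the classical facts that the defining set of an extended cyclic code is closed under multiplication by $p$ and that distinct $p$-cyclotomic cosets impose independent linear constraints (using that the Frobenius map fixes the $\mathbb{F}_p$-valued coordinates, so that $A_{rp^j}(\mathbf{c})=A_r(\mathbf{c})^{p^j}$). Everything else is routine bookkeeping with the binomial identity, Lucas' theorem, and the fact that a nonzero polynomial of degree $<q$ cannot vanish on all of $\mathbb{F}_q$.
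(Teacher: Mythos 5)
The paper does not prove this lemma at all---it is quoted from the reference [KLP67] (Kasami--Lin--Peterson) and used as a black box---so there is no internal proof to compare against. Your argument is correct and is, in substance, the classical proof of this theorem: reduce affine-invariance to translation-invariance using cyclicity for the scalings, expand $A_s(\tau_b\mathbf{c})=\sum_{r\preceq s}\binom{s}{r}b^{s-r}A_r(\mathbf{c})$ via the binomial theorem and Lucas' theorem, and in the forward direction exploit that this expression, as a polynomial in $b$ of degree at most $s\le n<q$ with pairwise distinct exponents $s-r$, cannot vanish on all of $\mathbb{F}_q$ unless every coefficient $\binom{s}{r}A_r(\mathbf{c})$ is zero. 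You correctly identify the one step that genuinely needs justification, namely that $\overline{T}$ is \emph{exactly} the vanishing locus $\{s\in\mathcal{P}:A_s\equiv 0\text{ on }\overline{\mathcal{C}}\}$, so that $r\notin\overline{T}$ yields a witness codeword with $A_r(\mathbf{c})\neq 0$; this follows, as you say, from the fact that the roots of the generator polynomial are precisely $\{\alpha^j:j\in T\}$ together with the parity-check equation contributing the exponent $0$. The only cosmetic caveat is the usual convention issue distinguishing the exponents $s=0$ and $s=n$ (since $x^n=1$ on $\mathbb{F}_q^*$ but $0^n=0\neq 0^0$) when $0$ happens to lie in the defining set of the underlying cyclic code; this does not arise for the codes in this paper and does not affect your argument. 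No gaps.
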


\begin{lemma}\label{The dual of an affine-invariant code}
  \cite{Ding18b} The dual of an affine-invariant code $\overline{\mathcal{C}}$ over $\mathbb{F}_p$ of length $n+1$
is also affine-invariant.
\end{lemma}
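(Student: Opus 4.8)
The plan is to combine the characterization of affine-invariance via defining sets (Lemma~\ref{Kasami-Lin-Peterson}) with the standard description of the defining set of the dual of an extended cyclic code. Write $q=p^m$ and let $\overline{T}\subseteq\{0,1,\ldots,q-1\}$ be the defining set of $\overline{\mathcal{C}}$. The first step is to record that $\overline{\mathcal{C}}^{\perp}$ is again an extended cyclic code and to identify its defining set $\overline{T}^{\perp}$ in terms of $\overline{T}$: up to the bookkeeping at the two extreme positions $0$ and $q-1$, one has $\overline{T}^{\perp}=\{0,1,\ldots,q-1\}\setminus\phi(\overline{T})$, where $\phi(s):=q-1-s$. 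This is the classical duality rule for cyclic codes transported to the extended setting, and I would either cite it or rederive it from the discrete-Fourier/Mattson--Solomon description of $\overline{\mathcal{C}}$.

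The combinatorial heart of the argument is the behaviour of $\phi$ with respect to the partial order $\preceq$. Since $q-1=\sum_{i=0}^{m-1}(p-1)p^i$, the $p$-adic digits of $\phi(s)=q-1-s$ are exactly $p-1-s_i$; in particular the subtraction involves no borrowing. Hence $\phi$ is the digit-wise complement, it is an involution, and $r\preceq s$ holds if and only if $\phi(s)\preceq\phi(r)$, i.e. $\phi$ reverses $\preceq$. Consequently $\phi$ carries $\preceq$-downward-closed sets to $\preceq$-upward-closed sets and vice versa, while complementation inside $\{0,1,\ldots,q-1\}$ interchanges upward-closed and downward-closed sets.

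Putting these together: by Lemma~\ref{Kasami-Lin-Peterson}, $\overline{\mathcal{C}}$ affine-invariant means $\overline{T}$ is downward-closed under $\preceq$; then $\phi(\overline{T})$ is upward-closed, so $\overline{T}^{\perp}=\{0,\ldots,q-1\}\setminus\phi(\overline{T})$ is downward-closed; applying Lemma~\ref{Kasami-Lin-Peterson} once more to $\overline{\mathcal{C}}^{\perp}$ shows that $\overline{\mathcal{C}}^{\perp}$ is affine-invariant. A cleaner but less self-contained route to the same conclusion bypasses defining sets: since $PAut(\mathcal{D})=PAut(\mathcal{D}^{\perp})$ for every linear code $\mathcal{D}$, the inclusion $GA_1(\mathbb{F}_q)\subseteq PAut(\overline{\mathcal{C}})$ immediately gives $GA_1(\mathbb{F}_q)\subseteq PAut(\overline{\mathcal{C}}^{\perp})$, and one then only has to verify that $\overline{\mathcal{C}}^{\perp}$ is itself an extended cyclic code. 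This holds because the subgroup of $GA_1(\mathbb{F}_q)$ fixing the coordinate $0\in\mathbb{F}_q$ acts as the cyclic shift on the remaining $q-1$ coordinates, and a short dimension count (using $\mathbf{1}\in\overline{\mathcal{C}}$, valid for any nontrivial affine-invariant $\overline{\mathcal{C}}$, since otherwise $q-1$ would lie in the downward-closed defining set and force $\overline{\mathcal{C}}=\{\mathbf{0}\}$) shows $\overline{\mathcal{C}}^{\perp}$ equals the extension of that cyclic code.

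I expect the main obstacle to be a matter of conventions rather than of mathematics: pinning down exactly how the defining set of $\overline{\mathcal{C}}^{\perp}$ is obtained from that of $\overline{\mathcal{C}}$, in particular the roles of the position $0$ (always present, encoding the overall parity check) and the position $q-1$, and making sure the passage between length-$(q-1)$ cyclic codes and length-$q$ extended codes is handled consistently. Once the correct correspondence $\overline{T}^{\perp}\leftrightarrow\{0,\ldots,q-1\}\setminus\phi(\overline{T})$ is in place, the remainder is the short order-theoretic observation above. The degenerate cases $\overline{\mathcal{C}}=\{\mathbf{0}\}$ and its dual should be disposed of separately at the outset.
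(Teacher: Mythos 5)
The paper offers no proof of this lemma: it is imported verbatim from \cite{Ding18b} with only a citation, so there is nothing internal to compare your argument against. Judged on its own terms, your proposal is correct, and in fact you give two valid routes. The second one is the cleanest and is essentially the textbook argument: since the standard inner product is permutation-invariant, $PAut(\mathcal{D})=PAut(\mathcal{D}^{\perp})$ for any linear code $\mathcal{D}$, so $GA_1(\mathbb{F}_q)\subseteq PAut(\overline{\mathcal{C}})$ transfers immediately to $\overline{\mathcal{C}}^{\perp}$; the only genuine content is then your verification that $\overline{\mathcal{C}}^{\perp}$ is again an \emph{extended cyclic} code, and your two checks (the stabilizer of $0$ in $GA_1(\mathbb{F}_q)$ induces the cyclic shift on $\mathbb{F}_q^*$, and $\mathbf{1}\in\overline{\mathcal{C}}$ forces every dual codeword to have zero coordinate sum) are exactly what is needed; the observation that $q-1\in\overline{T}$ would collapse the code, since $q-1$ dominates every element of $\mathcal{P}$ under $\preceq$, correctly disposes of the degenerate case. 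Your first route through Lemma~\ref{Kasami-Lin-Peterson} is also sound: the dual defining set is $\mathcal{P}\setminus\phi(\overline{T})$ with $\phi(s)=q-1-s$, and since $q-1$ has all $p$-adic digits equal to $p-1$, the map $\phi$ is the borrow-free digit-wise complement, hence an order-reversing involution, so downward-closed sets go to complements of upward-closed sets, i.e.\ to downward-closed sets. The caveats you flag (conventions for the defining set of the dual extended code and the roles of the positions $0$ and $q-1$) are real but routine, and you identify them accurately; no gap remains once a consistent convention is fixed.
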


%Affine-invariant codes are very attractive partly due to Theorem \ref{2-design}.
Affine-invariant codes are very attractive in the sense that they hold $2$-designs due to the following theorem.

\begin{theorem}\label{2-design}
  \cite{Ding18b} For each $i$ with $A_i \neq 0$ in an affine-invariant code $\overline{\mathcal{C}}$, the supports of the codewords of weight $i$ form a $2$-design.
\end{theorem}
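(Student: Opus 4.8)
The plan is to prove Theorem~\ref{2-design} by exhibiting a transitive-type combinatorial argument on the blocks induced by the affine group. First I would fix an index $i$ with $A_i\neq 0$, let $\mathcal{B}_i$ denote the collection of supports (as subsets of the coordinate set, which we identify with $\mathbb{F}_q$ via the natural indexing of the extended primitive cyclic code) of all codewords of weight $i$ in $\overline{\mathcal{C}}$, and set $\mathcal{P}=\mathbb{F}_q$. The key structural input is that $GA_1(\mathbb{F}_q)\subseteq PAut(\overline{\mathcal{C}})$: every affine map $\sigma_{a,b}:x\mapsto ax+b$ with $a\in\mathbb{F}_q^*$, $b\in\mathbb{F}_q$ permutes the coordinates of $\overline{\mathcal{C}}$ and hence sends a weight-$i$ codeword to a weight-$i$ codeword. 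Consequently $GA_1(\mathbb{F}_q)$ acts on $\mathcal{B}_i$ as a group of permutations, i.e. for $B\in\mathcal{B}_i$ we have $\sigma_{a,b}(B)\in\mathcal{B}_i$.

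Next I would invoke the well-known fact (e.g.\ as recorded in~\cite{HP03}) that $GA_1(\mathbb{F}_q)$ is $2$-transitive on $\mathbb{F}_q$: given any two ordered pairs of distinct points $(x_1,x_2)$ and $(y_1,y_2)$, the system $ax_1+b=y_1$, $ax_2+b=y_2$ has a unique solution $a=(y_1-y_2)(x_1-x_2)^{-1}\in\mathbb{F}_q^*$, $b=y_1-ax_1$, so some $\sigma_{a,b}$ carries the first pair to the second. The standard averaging/orbit-counting step then finishes the proof: let $\{x,y\}$ and $\{x',y'\}$ be any two $2$-subsets of $\mathcal{P}$, and let $\lambda_{\{x,y\}}$ (resp.\ $\lambda_{\{x',y'\}}$) be the number of blocks in $\mathcal{B}_i$ containing $\{x,y\}$ (resp.\ $\{x',y'\}$). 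Pick $\sigma\in GA_1(\mathbb{F}_q)$ with $\sigma(\{x,y\})=\{x',y'\}$; since $\sigma$ permutes $\mathcal{B}_i$ bijectively and preserves containment, it gives a bijection between the blocks containing $\{x,y\}$ and those containing $\{x',y'\}$, whence $\lambda_{\{x,y\}}=\lambda_{\{x',y'\}}$. Thus every $2$-subset lies in the same number $\lambda$ of blocks, so $(\mathcal{P},\mathcal{B}_i)$ is a $2$-$(q,i,\lambda)$ design, with $\lambda$ pinned down by~\eqref{condition}.

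One technical point I would address carefully is that the blocks must be genuine $k$-subsets with $2<i<q$ for the design to be nontrivial and simple in the sense fixed in the introduction; but the theorem statement only asserts a $2$-design, and simplicity here is automatic because distinct codewords with the same support are counted once at the level of sets (or, if one prefers, one notes that over $\mathbb{F}_p$ scalar multiples of a codeword share its support, so $\mathcal{B}_i$ is naturally a set of subsets). I expect the main obstacle — really the only nonroutine ingredient — to be the clean justification that the coordinate action of $GA_1(\mathbb{F}_q)$ on the extended code corresponds exactly to the geometric action on $\mathbb{F}_q$ indexing the coordinates, so that ``$\sigma_{a,b}$ is a permutation automorphism'' translates into ``$\sigma_{a,b}$ permutes supports''; once that identification is in place, the $2$-transitivity of $GA_1(\mathbb{F}_q)$ does all the work, and the rest is the elementary orbit argument above. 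Since the preceding material (Lemmas~\ref{Kasami-Lin-Peterson} and~\ref{The dual of an affine-invariant code}) and the definition of affine-invariant codes already supply this identification, the proof is short.
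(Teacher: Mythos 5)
Your proof is correct. The paper itself offers no proof of this theorem --- it is quoted verbatim from \cite{Ding18b} --- and your argument (the action of $GA_1(\mathbb{F}_q)\subseteq PAut(\overline{\mathcal{C}})$ on the weight-$i$ supports, combined with the sharp $2$-transitivity of the affine group on $\mathbb{F}_q$ and the orbit-counting step showing every $2$-subset of points lies in the same number of blocks) is exactly the standard proof given in that reference, so there is nothing to reconcile.
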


Lemma \ref{The dual of an affine-invariant code} and Theorem \ref{2-design}  given above are very powerful tools in constructing $t$-designs from linear codes. We will employ them later in this paper. The following theorem given by Ding in~\cite{Ding18b} reveals the relationship of the codewords with the same support  in a linear code  $\mathcal{C}$, which will be used to calculate the parameters of  $2$-designs.

\begin{theorem}\label{design parameter}~\cite{Ding18b} Let $\mathcal{C}$ be a linear code over $\mathbb{F}_p$ with minimum weight $d$. Let $w$ be the largest integer with $w\leq n$ satisfying
 $${w-\lfloor\frac{w+p-2}{p-1}\rfloor}<d.$$
Let $\mathbf{c_1}$ and $\mathbf{c_2}$ be two codewords of weight $i$ with $d\leq i\leq w$ and $Suppt(\mathbf{c_1})=Suppt(\mathbf{c_2}).$ Then $\mathbf{c_1}=a\mathbf{c_2}$ for some $a\in \mathbb{F}_p^*$.
\end{theorem}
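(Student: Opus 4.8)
The plan is to combine a pigeonhole count on the common support with the minimum-distance hypothesis. First I would write $S = Suppt(\mathbf{c_1}) = Suppt(\mathbf{c_2})$, so that $|S| = i$, and observe that for every $j \in S$ both coordinates $(\mathbf{c_1})_j$ and $(\mathbf{c_2})_j$ lie in $\mathbb{F}_p^*$. For each $a \in \mathbb{F}_p^*$ set
$$S_a = \{\, j \in S : (\mathbf{c_1})_j = a\,(\mathbf{c_2})_j \,\}.$$
The $p-1$ sets $S_a$ partition $S$, hence $\sum_{a \in \mathbb{F}_p^*} |S_a| = i$, and by the pigeonhole principle there is some $a^* \in \mathbb{F}_p^*$ with $|S_{a^*}| \ge \lceil i/(p-1) \rceil$.

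Next I would consider the codeword $\mathbf{c} := \mathbf{c_1} - a^*\mathbf{c_2}$, which lies in $\mathcal{C}$ by linearity. It vanishes at every coordinate outside $S$ (where $\mathbf{c_1}$ and $\mathbf{c_2}$ are both zero) and at every coordinate of $S_{a^*}$, so its Hamming weight satisfies
$$\mathrm{wt}(\mathbf{c}) \le i - |S_{a^*}| \le i - \left\lceil \frac{i}{p-1} \right\rceil = i - \left\lfloor \frac{i+p-2}{p-1} \right\rfloor,$$
the last equality being the standard identity $\lceil a/b\rceil = \lfloor (a+b-1)/b\rfloor$ applied with $b = p-1$. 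Putting $f(x) = x - \lfloor (x+p-2)/(p-1)\rfloor$ for integers $x$, incrementing $x$ by $1$ raises $(x+p-2)/(p-1)$ by $1/(p-1) \le 1$, so the floor term grows by $0$ or $1$; hence $f(x+1) - f(x) \in \{0,1\}$ and $f$ is non-decreasing. Since $i \le w$, this yields $\mathrm{wt}(\mathbf{c}) = f(i) \le f(w) < d$ by the defining property of $w$. As $d$ is the minimum weight of $\mathcal{C}$, we must have $\mathbf{c} = \mathbf{0}$, that is, $\mathbf{c_1} = a^*\mathbf{c_2}$, which is the claim.

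I do not anticipate a serious obstacle here: the only steps needing a little care are the ceiling-to-floor rewriting and the monotonicity of $f$, both of which are elementary. The conceptual core of the proof is simply that subtracting the correctly chosen scalar multiple of $\mathbf{c_2}$ from $\mathbf{c_1}$ annihilates at least a $1/(p-1)$-fraction of the common support, so the resulting codeword has weight below $d$ and must therefore be zero — unless $\mathbf{c_1}$ and $\mathbf{c_2}$ were proportional to begin with.
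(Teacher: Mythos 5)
Your proof is correct, and since the paper only cites this result from [Ding18b] without reproducing a proof, there is nothing internal to compare it against; your argument (pigeonhole on the ratio $(\mathbf{c_1})_j/(\mathbf{c_2})_j$ over the common support, then forcing $\mathbf{c_1}-a^*\mathbf{c_2}$ below the minimum weight) is exactly the standard one behind this lemma. The only cosmetic slip is writing $\mathrm{wt}(\mathbf{c}) = f(i)$ where $\mathrm{wt}(\mathbf{c}) \le f(i)$ is meant, which does not affect the chain of inequalities.
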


\subsection{Exponential sums}

An additive character of $\mathbb{F}_q$ is a nonzero function $\chi$ from $\mathbb{F}_q$ to the set of complex numbers of absolute value $1$ such that $\chi(x+y)=\chi(x)\chi(y)$ for any pair $(x,y) \in \mathbb{F}_q^2$. For each $u \in \mathbb{F}_q$, the function
$$\chi_u(v)=\zeta_p^{Tr(uv)},~v \in \mathbb{F}_q$$
denotes an additive character of $\mathbb{F}_q$. Since $\chi_0(v)=1$ for all $v \in \mathbb{F}_q$, we call $\chi_0$ the  {\em trivial} additive character of $\mathbb{F}_q$. We call $\chi_1$ the {\em canonical} additive character of $\mathbb{F}_q$ and we have  $\chi_u(x)=\chi_1(ux)$ for all $u\in\mathbb{F}_q$ ~\cite{LN97}.

To determine the parameters of codes ${\overline{{\mathcal{C}_1}^{\bot}}}^{\bot}$ and ${\overline{{\mathcal{C}_2}^{\bot}}}^{\bot}$ defined in Eqs.(\ref{code-1}) and (\ref{code-2}), we introduce the following functions
\begin{equation}\label{S(a.b.c)}
S(a,b,c)=\sum\limits_{x \in \mathbb{F}_q}\chi(ax^{p^l+1}+bx^2+cx),\quad a,b,c \in \mathbb{F}_q,
\end{equation}
and
\begin{equation}\label{S(a.b)}
S(a,b)=\sum\limits_{x \in \mathbb{F}_q}\chi(ax^{p^l+1}+bx), \quad a,b \in \mathbb{F}_q.
\end{equation}

The {\em Gauss sum} $G(\eta', \chi'_1)$ over $\mathbb{F}_p$ is defined by 
$$
G(\eta', \chi'_1)=\sum\limits_{v \in \mathbb{F}_p^*}\eta'(v)\chi'_1(v)=\sum\limits_{v \in \mathbb{F}_p}\eta'(v)\chi'_1(v),
$$
where $\chi'_1$ is the canonical additive characters of $\mathbb{F}_p$.  The following Lemmas \ref{Gauss}-\ref{value distribution} are essential to determine the values of Eqs.(\ref{S(a.b.c)}) and (\ref{S(a.b)}).

\begin{lemma}\label{Gauss}~\cite{LN97} With the notation above, we have
$$G(\eta', \chi'_1)={\sqrt{(-1)}^{(\frac{p-1}{2})^2}}\sqrt{p}=\sqrt{p^*}.$$
\end{lemma}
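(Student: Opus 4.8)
The final statement is Lemma~\ref{Gauss}, the evaluation of the quadratic Gauss sum over $\mathbb{F}_p$, asserting $G(\eta',\chi_1') = \sqrt{p^*}$ where $p^* = (-1)^{(p-1)/2}p$. My plan is to split the problem into the easy part (determining $G(\eta',\chi_1')^2$, hence $G$ up to sign) and the genuinely hard part (pinning down the sign), which is the classical theorem of Gauss.

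First I would record the elementary identity $G(\eta',\chi_1')^2 = \eta'(-1)\,p = (-1)^{(p-1)/2}p = p^*$. This follows by writing $G^2 = \sum_{u,v\in\mathbb{F}_p^*}\eta'(uv)\chi_1'(u+v)$, substituting $u = vt$, and using $\sum_{v\in\mathbb{F}_p^*}\chi_1'(v(t+1)) = p\cdot[t=-1] - 1$ together with $\sum_{t}\eta'(t) = 0$; the cross terms collapse and one is left with $\eta'(-1)p$. Since $\eta'$ is the quadratic character, $\eta'(-1) = (-1)^{(p-1)/2}$ by Euler's criterion. This shows $G(\eta',\chi_1') = \pm\sqrt{p^*}$, where $\sqrt{p^*}$ denotes the positive square root when $p\equiv 1\pmod 4$ and $i\sqrt{p}$ when $p\equiv 3\pmod 4$; in the latter case the ambiguity is between $\pm i\sqrt p$.

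The hard part is showing the sign is $+$, i.e. that the ``$\pm$'' is always ``$+$''. This is Gauss's celebrated sign determination, and the cleanest route I would take is via the finite Fourier transform / eigenvalue argument: consider the $p\times p$ matrix $F = (\zeta_p^{jk})_{0\le j,k\le p-1}$, whose eigenvalues are known to be $\pm\sqrt p$ and $\pm i\sqrt p$ with prescribed multiplicities (computable from $F^2$, $F^4$, and $\operatorname{tr} F$, $\operatorname{tr} F^2$). One relates $\operatorname{tr} F = \sum_j \zeta_p^{j^2}$, the quadratic Gauss sum in its ``$\sum \zeta_p^{x^2}$'' form, to $G(\eta',\chi_1')$ via $\sum_{x}\zeta_p^{x^2} = \sum_x (1+\eta'(x))\chi_1'(x) = G(\eta',\chi_1')$ (using $\sum_x \chi_1'(x) = 0$ and that $x\mapsto x^2$ is $2$-to-$1$ on $\mathbb{F}_p^*$). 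Matching the two expressions for the trace forces the sign to be positive. I fully expect this sign computation to be the main obstacle: the squaring argument is a few lines of character-sum manipulation, but every known proof that the sign is $+$ requires a real idea (Gauss's original induction, the Fourier-matrix trace computation above, a contour-integral/theta-function argument, or Schur's approach via the determinant of $F$).

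Since Lemma~\ref{Gauss} is quoted from~\cite{LN97}, an acceptable alternative in the write-up is simply to cite the standard reference for the sign and carry out only the squaring step explicitly; but if a self-contained argument is desired, I would flesh out the Fourier-matrix route sketched above. I would also note the small bookkeeping point that the symbol $\sqrt{p^*}$ must be interpreted as a fixed choice of square root (the ``principal'' one in the sense of Gauss: positive real times $1$ or $i$ according to $p\bmod 4$), so that the statement $G(\eta',\chi_1') = \sqrt{p^*}$ is unambiguous; with that convention fixed, the claim is exactly the assertion that Gauss's sum takes the ``plus'' sign.
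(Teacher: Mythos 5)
Your proposal is correct in substance, but note that the paper itself offers no proof of this lemma: it is quoted verbatim from~\cite{LN97} as a classical fact, so there is no internal argument to compare against. Your decomposition into the two standard pieces is the right one. The squaring step is carried out correctly: writing $G^2=\sum_{u,v\in\mathbb{F}_p^*}\eta'(uv)\chi_1'(u+v)$, substituting $u=vt$ so that $\eta'(uv)=\eta'(t)$, and evaluating $\sum_{v\in\mathbb{F}_p^*}\chi_1'(v(t+1))$ as $p-1$ for $t=-1$ and $-1$ otherwise indeed yields $G^2=\eta'(-1)p=p^*$, and the exponent $\left(\frac{p-1}{2}\right)^2$ in the lemma's formula is exactly the bookkeeping that encodes the correct branch ($\sqrt p$ for $p\equiv 1\pmod 4$, $i\sqrt p$ for $p\equiv 3\pmod 4$), since $\left(\frac{p-1}{2}\right)^2\equiv 0$ or $1\pmod 4$ according as $p\equiv 1$ or $3\pmod 4$. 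You are also right that the sign determination is the genuine content; your Schur-style route via the trace of the Fourier matrix $F=(\zeta_p^{jk})$, together with the identity $\sum_x\zeta_p^{x^2}=G(\eta',\chi_1')$, is a standard and complete way to finish, though as written it is a sketch (the multiplicities of the eigenvalues $\pm\sqrt p,\pm i\sqrt p$ still have to be computed from $\operatorname{tr}F^2$ and $\det F$). Given that the paper simply cites~\cite{LN97}, either fleshing out that computation or deferring the sign to the reference, as you suggest, is consistent with the paper's treatment; nothing in your outline would fail.
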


\begin{lemma}\label{Qu-Fea}\cite{DD15}
For each $y \in \mathbb{F}_p^*$,  $\eta(y)=1$ if $m\geq2$ is even, and $\eta(y)=\eta'(y)$ if  $m\geq1$ is odd.
\end{lemma}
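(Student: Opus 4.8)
The plan is to prove the identity directly from the multiplicative (Euler-criterion) description of the two quadratic characters: for $y\in\mathbb{F}_q^*$ one has $\eta(y)=y^{(q-1)/2}$, and for $y\in\mathbb{F}_p^*$ one has $\eta'(y)=y^{(p-1)/2}$, where in each case the right-hand side lands in $\{1,-1\}$. Since $q=p^m$ with $p$ odd, I would first factor the exponent,
\begin{equation*}
\frac{q-1}{2}=\frac{p^m-1}{2}=\frac{p-1}{2}\bigl(1+p+p^2+\cdots+p^{m-1}\bigr),
\end{equation*}
so that for $y\in\mathbb{F}_p^*$,
\begin{equation*}
\eta(y)=y^{(q-1)/2}=\bigl(y^{(p-1)/2}\bigr)^{1+p+\cdots+p^{m-1}}=\bigl(\eta'(y)\bigr)^{1+p+\cdots+p^{m-1}}.
\end{equation*}
(Equivalently, this is the statement $\eta=\eta'\circ N_{\mathbb{F}_q/\mathbb{F}_p}$ specialized to $y\in\mathbb{F}_p^*$, where the norm of such a $y$ equals $y^m$.)

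The next step is to reduce the exponent modulo $2$. Because $p$ is odd, every power $p^i$ is odd, and hence $1+p+\cdots+p^{m-1}\equiv m\pmod 2$. Since $\eta'(y)\in\{1,-1\}$, the value of $(\eta'(y))^{1+p+\cdots+p^{m-1}}$ depends only on the parity of the exponent, which gives the clean intermediate formula $\eta(y)=(\eta'(y))^m$ for all $y\in\mathbb{F}_p^*$.

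Finally I would split into the two cases. If $m\geq 2$ is even, then $(\eta'(y))^m=\bigl((\eta'(y))^2\bigr)^{m/2}=1$, so $\eta(y)=1$. If $m\geq 1$ is odd, then $(\eta'(y))^m=\eta'(y)$ because $\eta'(y)=\pm 1$, so $\eta(y)=\eta'(y)$. This finishes the proof. I do not anticipate any real obstacle: the only points requiring care are the bookkeeping between elements of $\mathbb{F}_p$ and the character values $\pm 1$, and the elementary parity observation about the geometric sum $1+p+\cdots+p^{m-1}$. As an alternative route for the even case one could argue instead that $\mathbb{F}_{p^2}\subseteq\mathbb{F}_q$ and that every element of $\mathbb{F}_p^*$ is already a square in $\mathbb{F}_{p^2}^*$ — since the group of nonzero squares of $\mathbb{F}_{p^2}$ has order $(p^2-1)/2=(p-1)(p+1)/2$, a multiple of $|\mathbb{F}_p^*|=p-1$ — but the uniform exponent computation above disposes of both cases simultaneously and is preferable.
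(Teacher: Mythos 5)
Your argument is correct and complete. Note first that the paper itself offers no proof of this statement: Lemma~\ref{Qu-Fea} is imported verbatim from the reference \cite{DD15}, so there is no in-paper argument to compare against. Your proof is the standard self-contained one and every step checks out: Euler's criterion identifies $\eta(y)$ with $y^{(q-1)/2}$ and $\eta'(y)$ with $y^{(p-1)/2}$ (the identification of $\{1,-1\}\subseteq\mathbb{F}_q$ with $\{1,-1\}\subseteq\mathbb{C}$ is unambiguous because $p$ is odd, as you implicitly use); the factorization $\frac{q-1}{2}=\frac{p-1}{2}(1+p+\cdots+p^{m-1})$ is exact; and since each $p^i$ is odd, the geometric sum is congruent to $m$ modulo $2$, giving $\eta(y)=(\eta'(y))^m$ on $\mathbb{F}_p^*$, from which both cases follow. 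Your parenthetical reformulation as $\eta=\eta'\circ N_{\mathbb{F}_q/\mathbb{F}_p}$ with $N(y)=y^m$ for $y\in\mathbb{F}_p^*$ is the conceptual version of the same computation, and the alternative argument for even $m$ (that $\mathbb{F}_p^*$ sits inside the subgroup of squares of $\mathbb{F}_{p^2}^*$ because $p-1$ divides $(p^2-1)/2$, subgroups of a cyclic group being ordered by divisibility of their orders) is also valid, though, as you say, redundant given the uniform exponent calculation. No gaps.
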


\begin{lemma}\label{polynomial}~\cite{Coulter981} Let $f(x)=a^{p^l}x^{p^{2l}}+ax \in \mathbb{F}_q[x]$, gcd$(m, l)=1$ and $b \in \mathbb{F}_q.$ There are three cases.

$(1)$ If $m$ is odd, then $f(x)$ is a permutation polynomial over $\mathbb{F}_q$ and
$$S(a,b)=\sqrt{p^*}^m\eta(a)\chi_1(-ax_{a,b}^{p^l+1}).$$

$(2)$ If $m$ is even and $a^{\frac {q-1}{p+1}}\neq (-1)^{m/2},$ then $f(x)$ is a permutation polynomial over $\mathbb{F}_q$ and $$S(a,b)=(-1)^{m/2}p^{m/2}\chi_1(-ax^{p^l+1}_{a,b}).$$

$(3)$ If $m$ is even and $a^{\frac {q-1}{p+1}}=(-1)^{m/2}$, then $f(x)$ is not a permutation polynomial over $\mathbb{F}_q$. We have $S(a,b)=0$ when the equation $f(x)=-b^{p^l}$ is unsolvable, and
$$S(a,b)=-(-1)^{m/2}p^{m/2+1}\chi_1(-ax^{p^l+1}_{a,b})$$
otherwise.
In particular,
\begin{eqnarray*}
S(a,0)=\left\{
\begin{array}{ll}
\sqrt{p^*}^m\eta(a)   & \mathrm{if}\,\ m ~is ~odd,\\
(-1)^{\frac{m}{2}}p^{\frac{m}{2}}    & \mathrm{if}\,\ m ~is ~ even~and~ a^{\frac{q-1}{p+1}} \neq (-1)^{\frac{m}{2}}, \\
(-1)^{\frac{m}{2}+1}p^{\frac{m}{2}+1}    & \mathrm{if}\,\ m ~ is ~even~and~  a^{\frac{q-1}{p+1}} = (-1)^{\frac{m}{2}}.
\end{array}
\right.
\end{eqnarray*}
\end{lemma}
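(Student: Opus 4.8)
The plan is to evaluate $S(a,b)$ by the classical ``square and substitute'' method for Weil sums, which converts everything into the arithmetic of the $\mathbb{F}_p$-linearized polynomial $f(x)=a^{p^l}x^{p^{2l}}+ax$ (we may assume $a\neq 0$, the case $a=0$ being trivial). First I would compute $|S(a,b)|^2$ as a double sum over $x,y\in\mathbb{F}_q$ and substitute $x=y+u$. Since $(y+u)^{p^l+1}-y^{p^l+1}=y^{p^l}u+yu^{p^l}+u^{p^l+1}$, the $y$-sum factors off; using invariance of $Tr$ under $x\mapsto x^{p^{m-l}}$ to rewrite $Tr(ay^{p^l}u)$ as a linear form in $y$, one obtains
\[
|S(a,b)|^2=\sum_{u\in\mathbb{F}_q}\chi_1(au^{p^l+1}+bu)\sum_{y\in\mathbb{F}_q}\chi_1\!\big((au^{p^l}+a^{p^{m-l}}u^{p^{m-l}})\,y\big).
\]
The inner sum is $q$ if $au^{p^l}+a^{p^{m-l}}u^{p^{m-l}}=0$ and $0$ otherwise, and raising that relation to the $p^l$-th power shows it is equivalent to $f(u)=0$. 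Hence $|S(a,b)|^2=q\sum_{u\in\ker f}\chi_1(au^{p^l+1}+bu)$.

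Next I would analyse $\ker f$. Being $\mathbb{F}_p$-linear, $\ker f$ is a subspace, and its nonzero elements solve $a^{p^l-1}u^{p^{2l}-1}=-1$, so $|\ker f|\in\{1,\,p^{\gcd(2l,m)}\}$. From $\gcd(m,l)=1$ one gets $\gcd(2l,m)=1$ for $m$ odd and $\gcd(2l,m)=2$ for $m$ even. For $m$ odd I would show $\ker f=\{0\}$ always: solvability of $u^{p^{2l}-1}=-a^{1-p^l}$ requires $(-a^{1-p^l})^{(q-1)/(p-1)}=1$, but $(1-p^l)/(p-1)\in\mathbb{Z}$ makes the $a$-part equal $1$, while $(q-1)/(p-1)=1+p+\cdots+p^{m-1}$ is odd for $m$ odd, so the left side is $-1$. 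For $m$ even the analogous computation, now with exponent $(q-1)/(p^2-1)$ (using $(-1)^{(q-1)/(p^2-1)}=(-1)^{m/2}$ and simplifying the $a$-part to $a^{(q-1)/(p+1)}$), shows that $f$ fails to be a permutation polynomial exactly when $a^{(q-1)/(p+1)}=(-1)^{m/2}$, in which case $|\ker f|=p^2$.

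When $f$ is a permutation polynomial (cases (1) and (2)), $|S(a,b)|^2=q$, and to pin down the value I would complete the square: there is a unique $x_{a,b}$ with $f(x_{a,b})=-b^{p^l}$, and substituting $x=x_{a,b}+y$ annihilates the $y$-linear part of the exponent, giving $S(a,b)=\chi_1(-ax_{a,b}^{p^l+1})\,S(a,0)$ after simplifying the constant term with the defining relation of $x_{a,b}$ and the Frobenius-invariance of $Tr$. It then remains to evaluate $S(a,0)=\sum_{x\in\mathbb{F}_q}\chi_1(ax^{p^l+1})$: for $m$ odd one has $\gcd(p^l+1,q-1)=2$, so $x\mapsto x^{p^l+1}$ is $2$-to-$1$ onto the squares and $S(a,0)=\eta(a)\sum_{v\in\mathbb{F}_q}\eta(v)\chi_1(v)=\eta(a)\sqrt{p^*}^m$ by the Hasse--Davenport relation together with Lemmas \ref{Gauss} and \ref{Qu-Fea}; for $m$ even one shows $S(a,0)$ is real (since $-1$ is a $(p^l+1)$-th power in $\mathbb{F}_q$ when $m$ is even) of modulus $p^{m/2}$ and that its sign equals $(-1)^{m/2}$ independently of $a$. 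Finally, in case (3) (only $m$ even), $|\ker f|=p^2$, and a Frobenius twist gives $2\,Tr(au^{p^l+1})=0$, hence $Tr(au^{p^l+1})=0$ for all $u\in\ker f$, so $\sum_{u\in\ker f}\chi_1(au^{p^l+1}+bu)=\sum_{u\in\ker f}\zeta_p^{Tr(bu)}$ equals $p^2$ or $0$ according as $u\mapsto Tr(bu)$ vanishes on $\ker f$ or not; this dichotomy is precisely the solvability of $f(x)=-b^{p^l}$, and in the nonzero case $|S(a,b)|=p^{m/2+1}$, the sign being extracted as in the even permutation case.

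The main obstacle is not the modulus, which drops straight out of the identity for $|S(a,b)|^2$ above, but the exact determination of the root-of-unity and sign factors: that $S(a,0)=(-1)^{m/2}p^{m/2}$ in the even permutation case (note this is \emph{not} the quadratic Gauss sum over $\mathbb{F}_q$, so the clean $2$-to-$1$ argument is unavailable there), and that case (3) carries the factor $-(-1)^{m/2}$. Isolating these signs requires either a Stickelberger-type congruence or a descent to $\mathbb{F}_{p^2}$, together with careful bookkeeping of the map $x\mapsto x^{p^l+1}$ when $m$ is even, where $\gcd(p^l+1,q-1)=p+1$ rather than $2$.
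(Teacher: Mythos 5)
This lemma is not proved in the paper at all: it is quoted verbatim from Coulter's \emph{Explicit evaluations of some Weil sums} (reference [4]), so there is no in-paper argument to compare against. Judged on its own terms, your outline follows the standard (indeed Coulter's) route, and the parts you actually carry out are correct: the squaring identity $|S(a,b)|^2=q\sum_{u\in\ker f}\chi_1(au^{p^l+1}+bu)$, the reduction of $\ker f\neq\{0\}$ to the condition $a^{(q-1)/(p+1)}=(-1)^{m/2}$ via $\gcd(2l,m)=\gcd(2,m)$ and the congruence $\tfrac{p^l-1}{p-1}\equiv 1\pmod{p+1}$ for $l$ odd, the completion of the square giving $S(a,b)=\chi_1(-ax_{a,b}^{p^l+1})S(a,0)$ in the permutation cases, the identity $Tr(au^{p^l+1})=0$ on $\ker f$, and the odd-$m$ evaluation of $S(a,0)$ through the $2$-to-$1$ map and Hasse--Davenport. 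The equivalence between $Tr(bu)$ vanishing on $\ker f$ and solvability of $f(x)=-b^{p^l}$ is asserted rather than checked (it does hold, via the adjoint $\hat f(y)=a^{p^{m-l}}y^{p^{m-2l}}+ay$ and the substitution $u=y^{p^{m-l}}$), but that is a minor omission.

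The genuine gap is the one you yourself flag and then do not close: the sign of $S(a,b)$ when $m$ is even. Your argument delivers only $S(a,0)\in\{\pm p^{m/2}\}$ in case (2) and $|S(a,b)|\in\{0,p^{m/2+1}\}$ in case (3); the assertions that the sign equals $(-1)^{m/2}$ \emph{uniformly} over all $p$ admissible cosets of $a$ modulo $(p+1)$-th powers in case (2), and equals $-(-1)^{m/2}$ in case (3), are precisely the substantive content of Coulter's theorem and are what the weight distributions in Tables 3--6 ultimately rest on. Saying this ``requires either a Stickelberger-type congruence or a descent to $\mathbb{F}_{p^2}$'' names the right toolbox --- one writes $S(a,0)=\sum_{\psi^{p+1}=1,\,\psi\neq 1}\bar\psi(a)G(\psi,\chi_1)$ and uses that these Gauss sums are pure because $p\equiv -1$ modulo the order of $\psi$ --- but it is not a proof: the purity argument still leaves a sign per character to be pinned down, and the uniformity in $a$ has to be extracted from the resulting character sum. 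Until that step is executed, cases (2) and (3) of the lemma, including the displayed value of $S(a,0)$ for $m$ even, remain unestablished.
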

Notice that $x_{a,b}^{p^l+1}$ is a solution to the equation $f(x)=-b^{p^l}$. Moreover, $x_{a,b}^{p^l+1}$ is the unique solution when $f(x)$ is a permutation polynomial over $\mathbb{F}_q$.

\begin{lemma}\label{solution}~\cite{Coulter982} For $m$  even and gcd$(m, l)=1$, the equation $a^{p^l}x^{p^{2l}}+ax=0$ is solvable for $x \in \mathbb{F}_q^*$ if and only if $$a^{\frac{q-1}{p+1}}=(-1)^{\frac{m}{2}}.$$
In such cases there are $p^2-1$ nonzero solutions.
\end{lemma}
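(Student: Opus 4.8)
The plan is to reduce the problem to counting solutions of a single binomial equation in the cyclic group $\mathbb{F}_q^*$. Since we seek nonzero solutions and (as is implicit from the context, e.g. Lemma~\ref{polynomial}) $a\neq 0$, I would first divide the equation $a^{p^l}x^{p^{2l}}+ax=0$ by $x$ to obtain the equivalent equation
\[
x^{p^{2l}-1}=-a^{1-p^l},\qquad x\in\mathbb{F}_q^*.
\]
Then I would invoke the standard fact that in a cyclic group of order $q-1$ the equation $x^k=c$ is solvable if and only if $c^{(q-1)/\gcd(k,q-1)}=1$, in which case it has exactly $\gcd(k,q-1)$ solutions. Here $k=p^{2l}-1$, and since $\gcd(m,l)=1$ with $m$ even we have that $l$ is odd and $\gcd(2l,m)=2$, hence $\gcd(p^{2l}-1,q-1)=p^{\gcd(2l,m)}-1=p^2-1$. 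This already yields the ``$p^2-1$ nonzero solutions'' assertion, so the remaining task is to show that the solvability condition $\bigl(-a^{1-p^l}\bigr)^{N}=1$, with $N:=(q-1)/(p^2-1)$, is equivalent to $a^{(q-1)/(p+1)}=(-1)^{m/2}$.

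To simplify that condition I would argue as follows. Writing $N=1+p^2+\cdots+p^{m-2}$ (a sum of $m/2$ odd integers) gives $(-1)^N=(-1)^{m/2}$, so the condition becomes $a^{N(1-p^l)}=(-1)^{m/2}$, equivalently (since $(-1)^{m/2}$ is its own inverse) $a^{N(p^l-1)}=(-1)^{m/2}$. Next, observe that $(a^N)^{p^2-1}=a^{q-1}=1$, so $a^N$ lies in the unique subgroup of order $p^2-1$ of $\mathbb{F}_q^*$, namely $\mathbb{F}_{p^2}^*$ (here $\mathbb{F}_{p^2}\subseteq\mathbb{F}_q$ since $2\mid m$). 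On $\mathbb{F}_{p^2}$ the Frobenius satisfies $y^{p^l}=y^p$ because $l$ is odd, so $p^l\equiv p\pmod{p^2-1}$; hence $a^{Np^l}=(a^N)^{p^l}=(a^N)^{p}=a^{Np}$, and therefore $a^{N(p^l-1)}=a^{N(p-1)}$. Finally $(p-1)N=(p-1)(p^m-1)/(p^2-1)=(p^m-1)/(p+1)=(q-1)/(p+1)$, so the condition reads exactly $a^{(q-1)/(p+1)}=(-1)^{m/2}$, as wanted.

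The only genuinely delicate points are the two number-theoretic reductions that use the hypotheses $\gcd(m,l)=1$ and $m$ even: the computation $\gcd(p^{2l}-1,\,p^m-1)=p^2-1$, which hinges on $\gcd(2l,m)=2$ (using that $l$ is forced to be odd), and the congruence $p^l\equiv p\pmod{p^2-1}$ used to collapse the exponent $p^l$ to $p$ inside $\mathbb{F}_{p^2}^*$; once these are in place the rest is routine bookkeeping with exponents and signs. An essentially equivalent alternative would be to view $L(x)=a^{p^l}x^{p^{2l}}+ax$ as an $\mathbb{F}_p$-linearized polynomial and analyze the dimension of its kernel in $\mathbb{F}_q$, but the cyclic-group argument above appears to be the most direct and self-contained.
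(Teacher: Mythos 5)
Your argument is correct. Note, however, that the paper offers no proof of this lemma at all: it is quoted verbatim from Coulter's \emph{Further evaluations of Weil sums} \cite{Coulter982}, so there is no in-paper proof to compare against. Your derivation is a sound, self-contained version of the standard argument: dividing by $x$ reduces the problem to the binomial equation $x^{p^{2l}-1}=-a^{1-p^l}$ in the cyclic group $\mathbb{F}_q^*$, and the two arithmetic facts you isolate are exactly the ones that matter --- $\gcd(p^{2l}-1,p^m-1)=p^{\gcd(2l,m)}-1=p^2-1$ (using that $l$ is odd because $\gcd(m,l)=1$ and $m$ is even), and $p^l\equiv p\pmod{p^2-1}$, which lets you replace $a^{N(p^l-1)}$ by $a^{N(p-1)}=a^{(q-1)/(p+1)}$ after observing that $a^N\in\mathbb{F}_{p^2}^*$. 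The sign bookkeeping via $(-1)^N=(-1)^{m/2}$ (since $N=(q-1)/(p^2-1)$ is a sum of $m/2$ odd terms) is also right. The only caveat, which you correctly flag, is that the statement implicitly assumes $a\neq 0$; for $a=0$ the equation is trivially solvable with $q-1$ nonzero solutions while $a^{(q-1)/(p+1)}=0$, so the equivalence as literally stated would fail, but in the context of Lemma~\ref{polynomial} only $a\in\mathbb{F}_q^*$ is relevant.
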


\begin{lemma}\label{value distribution}~\cite{FL08}
  For $m \geq 3$, gcd$(m, l)=1$, $\varepsilon=\pm1$, $0 \leq i \leq 2$ and $j\in \mathbb{F}_p^*,$  we define
\begin{eqnarray*}
n_{\varepsilon,i,j}=\left\{
\begin{array}{ll}
|\{(a,b,c)\in \mathbb{F}_q^3 : S(a,b,c)=\varepsilon\zeta^{j}_{p}p^{\frac{m+i}{2}} \}|   & \mathrm{if}\,\ m-i\,\mathrm{~is ~even}, \\
|\{(a,b,c)\in \mathbb{F}_q^3 : S(a,b,c)=\varepsilon\zeta^{j}_{p}\sqrt{p^*}p^{\frac{m+i-1}{2}} \}|     & \mathrm{if}\,\ m-i\,\mathrm{~is ~odd},
\end{array}
\right.
\end{eqnarray*}
and $ w=|\{(a,b,c)\in \mathbb{F}_q^3 : S(a,b,c)=0\}|$. Then the value distribution of the multiset $\{S(a,b,c) : a,b,c \in \mathbb{F}_q\}$ is given in Table \ref{1} when $m$ is odd and in Table \ref{2} when $m$ is even, respectively (see Tables~\ref{1} and~\ref{2} in Appendix I).
\end{lemma}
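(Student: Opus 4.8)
The plan is to split the evaluation according to whether the leading coefficient $a$ vanishes. If $a=0$, then $S(0,b,c)=\sum_{x\in\mathbb{F}_q}\chi(bx^2+cx)$ is an elementary quadratic exponential sum: it equals $q$ when $b=c=0$, equals $0$ when $b=0\neq c$, and when $b\neq 0$ completing the square gives $S(0,b,c)=\chi\!\left(-\tfrac{c^2}{4b}\right)\eta(b)\,G$, where $G=\sum_{x\in\mathbb{F}_q}\chi(x^2)$ is the quadratic Gauss sum over $\mathbb{F}_q$, of modulus $\sqrt q$ and with argument fixed by the Davenport--Hasse relation together with Lemma~\ref{Gauss}. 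If $a\neq 0$ and $b=0$, then $S(a,0,c)=S(a,c)$ in the notation of~\eqref{S(a.b)}, and its value is read off directly from Lemma~\ref{polynomial}. The remaining, and main, case is $a\neq 0$ and $b\neq 0$, which I would reduce to the previous one by expanding the factor $\chi(bx^2)$ as a Gauss sum: from $\sum_{y}\chi\!\left(-\tfrac{1}{4b}y^2+xy\right)=\eta\!\left(-\tfrac{1}{4b}\right)G\,\chi(bx^2)$ one obtains
\begin{equation*}
S(a,b,c)=\frac{\eta(-1)\eta(b)}{G}\sum_{w\in\mathbb{F}_q}\chi\!\left(-\tfrac{1}{4b}(w-c)^2\right)S(a,w),
\end{equation*}
so that the sum carrying the $x^2$--term becomes a weighted average of the pure sums $S(a,w)$ already controlled by Lemma~\ref{polynomial}.

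Substituting the three-case formula of Lemma~\ref{polynomial} for $S(a,w)$ leaves a sum of the shape $\sum_{w}\chi\!\left(-\tfrac{1}{4b}(w-c)^2-a\,x_{a,w}^{\,p^l+1}\right)$, up to the scalar prefactors and, in the non-permutation case, a restriction on $w$, where $x_{a,w}^{\,p^l+1}$ solves $f(x)=-w^{p^l}$ with $f(x)=a^{p^l}x^{p^{2l}}+ax$. The key observation is that, after inverting the linearized map $f$, the quantity $a\,x_{a,w}^{\,p^l+1}$ is itself a quadratic form in $w$ over $\mathbb{F}_p$; hence the residual sum is once more a quadratic exponential sum in $w$, which I would evaluate by the same completion-of-square/Gauss-sum recipe. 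Its modulus and phase are governed by the rank and Witt type of the combined quadratic form in $w$, and the only place this combined form can degenerate is when $a$ lies in the thin set singled out by Lemma~\ref{solution}. This produces the natural trichotomy of the computation, parallel to that of Lemma~\ref{polynomial}: $m$ odd, where $f$ is always a permutation; $m$ even with $a^{(q-1)/(p+1)}\neq(-1)^{m/2}$; and $m$ even with $a^{(q-1)/(p+1)}=(-1)^{m/2}$ (where one further separates solvability of $f(x)=-w^{p^l}$).

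It then remains to assemble the global distribution: for each of the three types of $a$ the previous step gives the multiset $\{S(a,b,c):b,c\in\mathbb{F}_q\}$, and multiplying by the number of $a\in\mathbb{F}_q^*$ of each type (the sizes being governed by $\gcd(p^l+1,q-1)$ and by Lemma~\ref{solution}, which counts the $a$ with $a^{(q-1)/(p+1)}=(-1)^{m/2}$) and adding the $a=0$ contribution yields Tables~\ref{1} and~\ref{2}. Throughout, Gauss sums are converted into powers of $\sqrt{p^*}$ via Lemma~\ref{Gauss}, and quadratic characters of $\mathbb{F}_q^*$ into those of $\mathbb{F}_p^*$ via Lemma~\ref{Qu-Fea}, which is exactly what distinguishes the $m$ odd table from the $m$ even one.

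The hard part will be the sign and phase bookkeeping. The unit multiplying each power of $p$ in the final answer receives simultaneous contributions from the two Gauss sums used in the reduction, from the Davenport--Hasse evaluation, from $\eta(a)$ and $\eta(b)$, and from the type of the residual quadratic form, and these combine differently in each of the three cases above (and again inside the even case according to the solvability condition); carrying all of them correctly, while inverting $f$ explicitly enough to pin down the residual quadratic form and its rank, is the delicate core of the argument. As a built-in consistency check one can match the resulting frequencies against the elementary power moments $\sum_{a,b,c}S(a,b,c)=q^3$ and $\sum_{a,b,c}|S(a,b,c)|^2=q^4$, both obtained at once by summing over $c$ first.
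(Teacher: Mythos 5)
The paper itself offers no proof of this lemma: it is imported verbatim from Feng and Luo \cite{FL08}, so there is no internal argument to compare yours against. Judged on its own terms, your sketch does put the \emph{pointwise} evaluation of $S(a,b,c)$ on a sound footing. The identity
\begin{equation*}
S(a,b,c)=\frac{\eta(-1)\eta(b)}{G}\sum_{w\in\mathbb{F}_q}\chi\!\left(-\tfrac{1}{4b}(w-c)^2\right)S(a,w)
\end{equation*}
is correct, and substituting Lemma~\ref{polynomial} does reduce each $S(a,b,c)$ to an exponential sum of a quadratic form in $w$, whose possible values match the entries of Tables~\ref{1} and~\ref{2}.

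The genuine gap is that the content of the lemma is not the list of values but their \emph{multiplicities}, and your plan for counting them does not work as stated. Within each of your three classes of $a$, the value of $S(a,b,c)$ still varies with $b$ through the rank and Witt type of the combined form $Tr\bigl(-\tfrac{1}{4b}w^2-a\,(f^{-1}(-w^{p^l}))^{p^l+1}\bigr)$, and with $c$ through the value of that form at its critical point; so ``multiplying by the number of $a$ of each type'' does not determine the distribution. You give no method for counting, for fixed $a$, how many $b$ yield each rank and type, nor how the critical value distributes over $\mathbb{F}_p$ --- and this counting is exactly as hard as the original problem, since your Gauss-sum reduction merely re-expresses the pencil of quadratic forms $Tr(ax^{p^l+1}+bx^2)$ in new coordinates. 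The consistency checks $\sum_{a,b,c}S(a,b,c)=q^3$ and $\sum_{a,b,c}|S(a,b,c)|^2=q^4$ are both correct, but they supply only two linear constraints on roughly a dozen unknown multiplicities, several of which additionally depend on $j$ through $\left(\frac{-j}{p}\right)$. What is actually required (and what \cite{FL08} carries out) is the computation of higher power moments of $S(a,b,0)$ by counting solutions of the associated systems of equations, combined with the parity and divisibility constraints on ranks, to solve for the rank distribution of the pencil, followed by a separate argument distributing the values over $c$ and over the phases $\zeta_p^j$. As it stands your proposal is an evaluation strategy for individual sums, not a proof of the stated value distribution.
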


For clarity,  we denote the multiplicity of the lines $1-3$  in Table~\ref{1} by $n_{\pm1,0,0}, n_{1,0,1}, n_{-1,0,1},$ and  lines $1-4$ and $8-11$ in Table~\ref{2} by $n_{1,0,0}$, $n_{-1,0,0}$, $n_{1,0,1}$, $n_{-1,0,1}$ and $n_{1,2,0}$, $n_{-1,2,0}$, $n_{1,2,1}$, $n_{-1,2,1}$,  respectively.

\subsection{Cyclotomic fields}
We  state the following basic facts on Galois group of cyclotomic fields $\mathbb{Q}(\zeta_p)$ since  $S(a,b,c)$ and $S(a,b)$ are elements in $\mathbb{Q}(\zeta_p)$.

\begin{lemma}\label{Cyclotomic fields}~\cite{IR90}
  Let $\mathbb{Z}$ be the rational integer ring and $\mathbb{Q}$ be the rational field.

$(1)$ The ring of integers in $K=\mathbb{Q}(\zeta_p)$ is $\mathcal{O}_k=\mathbb{Z}[\zeta_p]$ and $\{\zeta^i_{p} : 1 \leq i \leq p-1\}$ is an integral basis of $\mathcal{O}_k.$

$(2)$ The filed extension $K/\mathbb{Q}$ is Galois of degree $p-1$ and the Galois group $Gal(K/\mathbb{Q})=\{\sigma_y : y \in (\mathbb{Z}/p\mathbb{Z})^*\},$ where the automorphism $\sigma_y$ of $K$ is defined by $\sigma_y(\zeta_p)=\zeta^y_{p}.$

$(3)$ $K$ has a unique quadratic subfield $L=\mathbb{Q}(\sqrt{p^*}).$ For $1 \leq y \leq p-1, \sigma_y(\sqrt{p^*})
=\eta'(y)\sqrt{p^*}.$ Therefore, the Galois group $Gal(L/\mathbb{Q})$ is $\{1,\sigma_\gamma\},$ where $\gamma$ is any quadratic nonresidue in $\mathbb{F}_p.$
\end{lemma}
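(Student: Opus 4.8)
The plan is to prove the three parts in order, drawing on the standard theory of cyclotomic extensions; essentially all the real work sits in part $(1)$, while $(2)$ and $(3)$ follow formally.

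\emph{Part $(1)$.} First I would record that the minimal polynomial of $\zeta_p$ over $\mathbb{Q}$ is $\Phi_p(x)=(x^p-1)/(x-1)=x^{p-1}+x^{p-2}+\cdots+1$: the substitution $x\mapsto x+1$ turns it into $x^{p-1}+\binom{p}{1}x^{p-2}+\cdots+\binom{p}{p-1}$, which is Eisenstein at $p$, so $\Phi_p$ is irreducible and $[K:\mathbb{Q}]=p-1$ with $K=\mathbb{Q}(\zeta_p)$. The Eisenstein property also says that $p$ is totally ramified in $K$, that $1-\zeta_p$ generates the unique prime over $p$, and that $\mathbb{Z}[\zeta_p]$ is already maximal at $p$. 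To control the remaining primes I would compute $\mathrm{disc}(\mathbb{Z}[\zeta_p])=\pm N_{K/\mathbb{Q}}\big(\Phi_p'(\zeta_p)\big)=\pm p^{p-2}$, a power of $p$ up to sign. Since $[\mathcal{O}_K:\mathbb{Z}[\zeta_p]]^2$ divides the discriminant, the index is a power of $p$; combined with local maximality at $p$ this forces the index to be $1$, i.e.\ $\mathcal{O}_K=\mathbb{Z}[\zeta_p]$, and then $\{1,\zeta_p,\dots,\zeta_p^{p-2}\}$ (equivalently $\{\zeta_p^i:1\le i\le p-1\}$) is a $\mathbb{Z}$-basis.

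\emph{Part $(2)$.} Since $K$ contains all roots $\zeta_p^i$ ($0\le i\le p-1$) of $x^p-1$, it is the splitting field of this polynomial, hence $K/\mathbb{Q}$ is Galois, of degree $p-1$ by part $(1)$. Any $\sigma\in\mathrm{Gal}(K/\mathbb{Q})$ must send the primitive $p$-th root of unity $\zeta_p$ to another one, so $\sigma(\zeta_p)=\zeta_p^y$ for some $y\in(\mathbb{Z}/p\mathbb{Z})^*$, giving $\sigma=\sigma_y$. The assignment $y\mapsto\sigma_y$ is a homomorphism $(\mathbb{Z}/p\mathbb{Z})^*\to\mathrm{Gal}(K/\mathbb{Q})$, injective because $\sigma_y=\mathrm{id}$ forces $y\equiv 1$, hence an isomorphism by comparing orders.

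\emph{Part $(3)$.} As $(\mathbb{Z}/p\mathbb{Z})^*$ is cyclic of even order $p-1$, it has a unique subgroup $H$ of index $2$, namely the quadratic residues; by the Galois correspondence $K$ has a unique quadratic subfield $L=K^H$. To identify $L$ I would use the quadratic Gauss sum $g=\sum_{a\in\mathbb{F}_p^*}\eta'(a)\zeta_p^a\in K$: an elementary computation (consistent with Lemma~\ref{Gauss}) gives $g^2=(-1)^{(p-1)/2}p=p^*$, so $\sqrt{p^*}\in K$ and $L=\mathbb{Q}(\sqrt{p^*})$. For the Galois action, substituting $a\mapsto y^{-1}a$ yields $\sigma_y(g)=\sum_a\eta'(a)\zeta_p^{ya}=\eta'(y^{-1})\sum_b\eta'(b)\zeta_p^{b}=\eta'(y)\,g$, using $\eta'(y^{-1})=\eta'(y)$; hence $\sigma_y(\sqrt{p^*})=\eta'(y)\sqrt{p^*}$. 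Thus $\sigma_y$ fixes $L$ exactly when $y$ is a quadratic residue, and $\mathrm{Gal}(L/\mathbb{Q})=\{1,\sigma_\gamma|_L\}$ for any quadratic nonresidue $\gamma\in\mathbb{F}_p$. The main obstacle is part $(1)$: showing $[\mathcal{O}_K:\mathbb{Z}[\zeta_p]]=1$ genuinely needs two ingredients — the discriminant evaluation to bound the index by a power of $p$, and the Eisenstein/total-ramification argument at $p$ to exclude $p$ from the index. Once part $(1)$ is in hand, parts $(2)$ and $(3)$ are routine Galois theory together with the single identity $g^2=p^*$.
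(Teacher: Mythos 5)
The paper does not prove this lemma at all --- it is quoted verbatim from the reference \cite{IR90} --- so there is no internal argument to compare against. Your outline is the standard textbook proof and is correct: Eisenstein at $p$ plus the discriminant $\pm p^{p-2}$ to get $\mathcal{O}_K=\mathbb{Z}[\zeta_p]$, the splitting-field argument for $(2)$, and the quadratic Gauss sum with $g^2=p^*$ and $\sigma_y(g)=\eta'(y)g$ for $(3)$. The only point worth making explicit is the parenthetical ``equivalently'': the basis stated in the lemma is $\{\zeta_p^i:1\le i\le p-1\}$ rather than the power basis $\{1,\zeta_p,\dots,\zeta_p^{p-2}\}$, and the equivalence rests on the relation $1+\zeta_p+\cdots+\zeta_p^{p-1}=0$, which gives a change of basis with determinant $\pm1$ --- a one-line remark, but it should be said.
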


\section{Weight distributions of two classes of linear codes}\label{theorem}
Now, we are ready to present the weight distributions of the codes ${\overline{{\mathcal{C}_1}^{\bot}}}^{\bot}$ and ${\overline{{\mathcal{C}_2}^{\bot}}}^{\bot}$ in Theorems \ref{weight1} and \ref{weight2}, respectively. We also use Magma programs to give some examples.

\begin{theorem}\label{weight1}
Let $m\geq 3$. The weight distribution of the cod ${\overline{{\mathcal{C}_1}^{\bot}}}^{\bot}$ over $\mathbb{F}_p$ with length $n+1$ and $dim({\overline{{\mathcal{C}_1}^{\bot}}}^{\bot})=3m+1$ is given in Table \ref{3} when $m$ is odd and in Table \ref{4} when $m$ is even, respectively.
\begin{table}
\begin{center}
\caption{The weight distribution of ${\overline{{\mathcal{C}_1}^{\bot}}}^{\bot}$ when $m$ is odd}\label{3}
\begin{tabular}{ll}
\hline\noalign{\smallskip}
Weight  &  Multiplicity   \\
\noalign{\smallskip}
\hline\noalign{\smallskip}
$0$  &  1 \\
$p^{m-1}(p-1)$  &  $ p(2p^{2m-1}-2p^{2m-2}+p^{2m-3}-p^{2m-4}+p^{m-1}$\\ &$-p^{m-2}+2)(p^m-1) $    \\
$p^{\frac{m-1}{2}}(p^{\frac{m+1}{2}}-p^{\frac{m-1}{2}}-p+1) $ & $ \frac{1}{2}p^{\frac{3m-3}{2}}(p^m-1)(p^{\frac{m-1}{2}}+1)  $     \\
$ p^{\frac{m-1}{2}}(p^{\frac{m+1}{2}}-p^{\frac{m-1}{2}}+p-1)$  &  $ \frac{1}{2}p^{\frac{3m-3}{2}}(p^m-1)(p^{\frac{m-1}{2}}-1)$     \\
$ p^{\frac{m-1}{2}}(p^{\frac{m+1}{2}}-p^{\frac{m-1}{2}}+1)$  &  $ \frac{p^m(p^m-1)(p^{m+2}-p^{m+1}-p^{m-2}+p^{\frac{m+1}{2}}-p^{\frac{m-3}{2}}+p^2)}{2(p+1)}$     \\
$ p^{\frac{m-1}{2}}(p^{\frac{m+1}{2}}-p^{\frac{m-1}{2}}-1)$  &  $ \frac{p^m(p^m-1)(p^{m+2}-p^{m+1}-p^{m-2}-p^{\frac{m+1}{2}}+p^{\frac{m-3}{2}}+p^2)}{2(p+1)}$     \\
$ p^{\frac{m+1}{2}}(p^{\frac{m-1}{2}}-p^{\frac{m-3}{2}}+1)$  &  $
\frac{1}{2}p^{m-2}(p^m-1)(p^{m-1}-1)/(p+1)$     \\
$ p^{\frac{m+1}{2}}(p^{\frac{m-1}{2}}-p^{\frac{m-3}{2}}-1)$  &  $
\frac{1}{2}p^{m-2}(p^m-1)(p^{m-1}-1)/(p+1)$     \\
$ p^m$  &  $ p-1$     \\
\noalign{\smallskip}
\hline
\end{tabular}
\end{center}
\end{table}

\begin{table}
\begin{center}
\caption{The weight distribution of ${\overline{{\mathcal{C}_1}^{\bot}}}^{\bot}$ when $m$ is even}\label{4}
\begin{tabular}{ll}
\hline\noalign{\smallskip}
Weight  &  Multiplicity   \\
\noalign{\smallskip}
\hline\noalign{\smallskip}
$0$  &  1 \\
$p^{m-1}(p-1)$  &  $ p(p^{2m-1}-p^{2m-2}+2p^{2m-3}-p^{m-2}+1)(p^m-1)$    \\
$p^{\frac{m-2}{2}}(p^{\frac{m+2}{2}}-p^{\frac{m}{2}}-p+1) $ & $ \frac{p^{m+2}(p^m-1)(p^m-p^{m-1}-p^{m-2}+p^{\frac{m}{2}}-p^{\frac{m-2}{2}}+1)}{2(p^2-1)}  $     \\
$ p^{\frac{m}{2}}(p^{\frac{m}{2}}-p^{\frac{m-2}{2}}-p+1)$  &  $ \frac{1}{2}p^{m-2}(p^m-1)(p^{\frac{m}{2}}-1)(p^{\frac{m-2}{2}}+1)/(p^2-1)$     \\
$ p^{\frac{m-2}{2}}(p^{\frac{m+2}{2}}-p^{\frac{m}{2}}+p-1)$  &  $ \frac{p^{m+2}(p^m-1)(p^m-p^{m-1}-p^{m-2}-p^{\frac{m}{2}}+p^{\frac{m-2}{2}}+1)}{2(p^2-1)} $     \\
$ p^{\frac{m}{2}}(p^{\frac{m}{2}}-p^{\frac{m-2}{2}}+p-1)$  &  $ \frac{1}{2}p^{m-2}(p^m-1)(p^{\frac{m}{2}}+1)(p^{\frac{m-2}{2}}-1)/(p^2-1)$     \\
$ p^{\frac{m-2}{2}}(p^{\frac{m+2}{2}}-p^{\frac{m}{2}}+1)$  & $ \frac{p^{m+2}(p^m-1)(p^m-p^{m-1}-p^{m-2}+p^{\frac{m}{2}}-p^{\frac{m-2}{2}}+1}{2(p+1)}$     \\
$ p^{\frac{m}{2}}(p^{\frac{m}{2}}-p^{\frac{m-2}{2}}+1)$ & $
\frac{p^{m-2}(p^m-1)(p^{m+2}-p^m+p^{m-1}+p^{\frac{m}{2}}-p^{\frac{m-2}{2}}-1)}{2(p+1)} $    \\
$ p^{\frac{m-2}{2}}(p^{\frac{m+2}{2}}-p^{\frac{m}{2}}-1)$  & $ \frac{p^{m+2}(p^m-1)(p^m-p^{m-1}-p^{m-2}-p^{\frac{m}{2}}+p^{\frac{m-2}{2}}+1)}{2(p+1)}$    \\
$ p^{\frac{m}{2}}(p^{\frac{m}{2}}-p^{\frac{m-2}{2}}-1)$ & $
\frac{p^{m-2}(p^m-1)(p^{m+2}-p^m+p^{m-1}-p^{\frac{m}{2}}+p^{\frac{m-2}{2}}-1)}{2(p+1)} $    \\
$ p^m$  &  $ p-1$     \\
\noalign{\smallskip}
\hline
\end{tabular}
\end{center}
\end{table}
\end{theorem}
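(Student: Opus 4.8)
The plan is to express the Hamming weight of each codeword as a count of zeros of a trace map, reduce that count to the exponential sum $S(a,b,c)$ of (\ref{S(a.b.c)}) via orthogonality of additive characters, and then extract both the weights and their multiplicities from the value distribution of $S(a,b,c)$ supplied by Lemma~\ref{value distribution}, using the action of $Gal(\mathbb{Q}(\zeta_p)/\mathbb{Q})$ described in Lemma~\ref{Cyclotomic fields}.

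First, by (\ref{code-1}) the codewords of ${\overline{{\mathcal{C}_1}^{\bot}}}^{\bot}$ are $\mathbf{c}_{a,b,c,h}=\big(Tr(ax^{p^l+1}+bx^2+cx)+h\big)_{x\in\mathbb{F}_q}$, so its length is $p^m=n+1$, and its dimension is $(n+1)-\dim({\mathcal{C}_1}^{\bot})=1+\dim\mathcal{C}_1=3m+1$, since the $p$-cyclotomic cosets $C_1$, $C_2$, $C_{p^l+1}$ modulo $n$ are pairwise distinct and each of size $m$ when $\gcd(m,l)=1$ and $m\ge 3$. For fixed $(a,b,c,h)$ the weight of $\mathbf{c}_{a,b,c,h}$ is $p^m-N$, where $N=|\{x\in\mathbb{F}_q:Tr(ax^{p^l+1}+bx^2+cx)=-h\}|$; expanding $N$ by orthogonality of the additive characters of $\mathbb{F}_p$ and recognizing the inner sum as $S(ya,yb,yc)$ yields
$$ N=p^{m-1}+\frac{1}{p}\sum_{y\in\mathbb{F}_p^*}\zeta_p^{yh}\,S(ya,yb,yc). $$
Since $\sigma_y(\zeta_p^{Tr(z)})=\zeta_p^{y\,Tr(z)}$, one has $S(ya,yb,yc)=\sigma_y(S(a,b,c))$ with $\sigma_y\in Gal(\mathbb{Q}(\zeta_p)/\mathbb{Q})$, hence
$$ wt(\mathbf{c}_{a,b,c,h})=p^{m-1}(p-1)-\frac{1}{p}\sum_{y\in\mathbb{F}_p^*}\zeta_p^{yh}\,\sigma_y\!\big(S(a,b,c)\big). $$
The triple $(a,b,c)=(0,0,0)$ is handled directly: it gives the zero codeword ($h=0$) and the $p-1$ codewords of weight $p^m$ ($h\ne 0$), i.e. the first and last rows of Tables~\ref{3} and~\ref{4}.

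Next, for $(a,b,c)\ne(0,0,0)$ I would substitute in turn each possible value of $S(a,b,c)$ listed in Lemma~\ref{value distribution} --- for $m$ odd these are $0$ and $\varepsilon\zeta_p^{j}\sqrt{p^*}p^{(m-1)/2}$, $\varepsilon\zeta_p^{j}p^{(m+1)/2}$, $\varepsilon\zeta_p^{j}\sqrt{p^*}p^{(m+1)/2}$ with $\varepsilon=\pm1$, $j\in\mathbb{F}_p$; for $m$ even the analogous but longer list built from $p^{m/2}$, $\sqrt{p^*}p^{m/2}$, $p^{(m+2)/2}$ --- and evaluate the twisted sum using $\sigma_y(\zeta_p^{j})=\zeta_p^{yj}$, $\sigma_y(\sqrt{p^*})=\eta'(y)\sqrt{p^*}$, together with $\sum_{y\in\mathbb{F}_p^*}\eta'(y)\zeta_p^{yt}=\eta'(t)\sqrt{p^*}$ for $t\ne 0$ (and $0$ for $t=0$), which follows from Lemma~\ref{Gauss} after a substitution, and $\sum_{y\in\mathbb{F}_p^*}\zeta_p^{yt}\in\{p-1,-1\}$. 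For instance $S(a,b,c)=\varepsilon\zeta_p^{j}\sqrt{p^*}p^{(m-1)/2}$ makes the twisted sum equal $\varepsilon\eta'(h+j)(-1)^{(p-1)/2}p^{(m+1)/2}$ when $h+j\ne0$ and $0$ when $h+j=0$, whereas $S(a,b,c)=\varepsilon\zeta_p^{j}p^{(m+1)/2}$ gives $\varepsilon(p-1)p^{(m+1)/2}$ or $-\varepsilon p^{(m+1)/2}$ according as $h+j=0$ or not. Dividing by $p$ and subtracting from $p^{m-1}(p-1)$ then produces precisely the list of weights in Table~\ref{3} (resp. Table~\ref{4}), so only the stated weights occur.

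It remains to compute the multiplicities. For each target weight $i$ I would add up, over the classes of triples $(a,b,c)$ whose cardinalities are furnished by Lemma~\ref{value distribution}, the number of $h\in\mathbb{F}_p$ meeting the conditions on $h$, on $h+j$, and on their quadratic characters $\eta'$ that lead to weight $i$; since one $S$-class feeds several weights as $h$ varies, collecting all contributions gives the displayed formulas, which I would finally verify against $\sum_i A_i=p^{3m+1}$ and $\sum_i iA_i=(p-1)p^{4m}$. The main obstacle is this bookkeeping: correctly pairing the root-of-unity exponent $j$ carried by $S(a,b,c)$ with the shift $h$, splitting the $h$'s by the value of $\eta'$, keeping the $\varepsilon=\pm1$ sign classes separate, and then carrying the whole count through again for even $m$, where Lemma~\ref{value distribution} provides more $S$-values and Table~\ref{4} has correspondingly more rows. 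The character-sum reduction itself is routine; the difficulty lies in assembling the multiplicities consistently.
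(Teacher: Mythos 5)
Your proposal follows essentially the same route as the paper's own proof in Appendix II: reduce the weight to the zero-count $T(a,b,c,h)=p^{m-1}+\frac{1}{p}\sum_{y\in\mathbb{F}_p^*}\zeta_p^{yh}\sigma_y(S(a,b,c))$ via orthogonality, evaluate the twisted sums case by case using the Galois action of Lemma~\ref{Cyclotomic fields} and the Gauss sum of Lemma~\ref{Gauss}, and then assemble the multiplicities from the value distribution in Lemma~\ref{value distribution}. The case analysis you sketch (including the $\eta'(h+j)$ split and the $h+j=0$ versus $h+j\neq 0$ dichotomy) matches the paper's computation, so the proposal is correct in approach and differs only in that the final bookkeeping is described rather than written out.
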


One can see that the code is eight-weight if $m$ is odd and ten-weight if $m$ is even. The proof of Theorem~\ref{weight1} is put in Appendix II.

\begin{example}\label{example1}
If $(p,m)=(3,3)$, then the code ${\overline{{\mathcal{C}_1}^{\bot}}}^{\bot}$ has parameters $[27,10,9]$ and weight enumerator $1+78z^9+1404z^{12}+14040z^{15}+27300z^{18}+15444z^{21}+702z^{24}+80z^{27},$ which confirms  the results in Theorem \ref{weight1}.
\end{example}

%\begin{example}\label{example2}
%If $(p,m)=(3,5)$, then the code ${\overline{{\mathcal{C}_1}^{\bot}}}^{\bot}$ has parameters $[243,16,135]$ and weight enumerator $1+65340z^{135}+882090z^{144}+10408662z^{153}+20158116z^{162}+10761498\\z^{171}+705672z^{180}+65340z^{189}+2z^{243},$ which is verified by a Magma program.
%\end{example}

\begin{example}\label{example3}
If $(p,m)=(3,4)$, then the code ${\overline{{\mathcal{C}_1}^{\bot}}}^{\bot}$ has parameters $[81,13,36]$ and weight enumerator $1+1440z^{36}+60120z^{45}+189540z^{48}+291600z^{51}+464640z^{54}+379080z^{57}+145800z^{60}+61200z^{63}+900z^{72}+2z^{81},$ which confirms the results in Theorem \ref{weight1}.
\end{example}

%\begin{example}\label{example4}
%If $(p,m)=(3,6)$, then the code ${\overline{{\mathcal{C}_1}^{\bot}}}^{\bot}$ has parameters $[729,19,432]$ and weight enumerator $1+958230z^{432}+44638776z^{459}+126574812z^{468}+231655788z^{477}+343726656z^{486}+253149624z^{495}+115827894z^{504}+44904132z^{513}+825552z^{540}+2z^{729},$ which is verified by a Magma program.
%\end{example}

\begin{theorem}\label{weight2}
Let $m\geq 3$. The weight distribution of the code ${\overline{{\mathcal{C}_2}^{\bot}}}^{\bot}$ over $\mathbb{F}_p$ with length $n+1$ and $dim({\overline{{\mathcal{C}_2}^{\bot}}}^{\bot})=2m+1$ is given in Table \ref{5} when $m$ is odd and in Table \ref{6} when $m$ is even, respectively.
\begin{table}
\begin{center}
\caption{The weight distribution of ${\overline{{\mathcal{C}_2}^{\bot}}}^{\bot}$ when $m$ is odd}\label{5}
\begin{tabular}{ll}
\hline\noalign{\smallskip}
Weight  &  Multiplicity   \\
\noalign{\smallskip}
\hline\noalign{\smallskip}
$0$  &  1 \\
$p^{m-1}(p-1)$  &  $ p(p^{m-1}+1)(p^m-1) $    \\
$ p^{m-1}(p-1)+p^{\frac{m-1}{2}}$  &  $ \frac{1}{2}p^m(p-1)(p^m-1)$     \\
$ p^{m-1}(p-1)-p^{\frac{m-1}{2}}$  &  $ \frac{1}{2}p^m(p-1)(p^m-1)$     \\
$p^m$ & $ p-1 $     \\
\noalign{\smallskip}
\hline
\end{tabular}
\end{center}
\end{table}

\begin{table}
\begin{center}
\caption{The weight distribution of ${\overline{{\mathcal{C}_2}^{\bot}}}^{\bot}$ when $m$ is even}\label{6}
\begin{tabular}{ll}
\hline\noalign{\smallskip}
Weight  &  Multiplicity   \\
\noalign{\smallskip}
\hline\noalign{\smallskip}
$0$  &  1 \\
$p^{m-1}(p-1)$  &  $ p(p^{m-1}-p^{m-2}+1)(p^m-1) $    \\
$ p^{m-1}(p-1)-(-1)^{\frac{m}{2}}p^{\frac{m}{2}-1}(p-1)$  &  $ p^{m+1}(p^m-1)/(p+1)$     \\
$ p^{m-1}(p-1)+(-1)^{\frac{m}{2}}p^{\frac{m}{2}-1}$  &  $ p^{m+1}(p-1)(p^m-1)/(p+1)$     \\
$ p^{m-1}(p-1)+(-1)^{\frac{m}{2}}p^{\frac{m}{2}}(p-1)$  &  $ p^{m-2}(p^m-1)/(p+1)$     \\
$ p^{m-1}(p-1)-(-1)^{\frac{m}{2}}p^{\frac{m}{2}}$  &  $ p^{m-2}(p-1)(p^m-1)/(p+1)$     \\
$p^m$ & $ p-1 $     \\
\noalign{\smallskip}
\hline
\end{tabular}
\end{center}
\end{table}
\end{theorem}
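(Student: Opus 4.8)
\textbf{Proof proposal for Theorem \ref{weight2}.}

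The plan is to mimic the strategy already indicated for Theorem \ref{weight1} but with the simpler exponential sum $S(a,b)$ in place of $S(a,b,c)$. A generic codeword of ${\overline{{\mathcal{C}_2}^{\bot}}}^{\bot}$ is $\mathbf{c}(a,b,h)=\bigl(Tr(ax^{p^l+1}+bx)_{x\in\mathbb{F}_q}+h\bigr)$ together with one extended coordinate, so that $\mathbf{c}(a,b,h)$ has length $n+1=p^m$. First I would record the Hamming weight of such a codeword in terms of a character sum: since the number of $x\in\mathbb{F}_q$ with $Tr(ax^{p^l+1}+bx)+h=0$ equals $\frac{1}{p}\sum_{y\in\mathbb{F}_p}\zeta_p^{y h}\,\chi_1\bigl(y(ax^{p^l+1}+bx)\bigr)$ summed over $x$, one gets
\begin{equation*}
wt(\mathbf{c}(a,b,h))=p^m-\frac{1}{p}\Bigl(p^m\,[\,(a,b,h)=0\,]+\sum_{y\in\mathbb{F}_p^*}\zeta_p^{yh}\,S(ya,yb)\Bigr),
\end{equation*}
where I must be careful to also handle the extra coordinate (it vanishes iff $\sum_{x}(Tr(ax^{p^l+1}+bx)+h)=0$, equivalently iff a certain linear condition holds; this only shifts the weight by $0$ or $1$ and is cleanest to package by noting the code is the dual of an extended cyclic code so all codewords come in the stated form with the length-$p^m$ normalization). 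The cases $a=b=0$ give the zero codeword and the $p-1$ codewords of weight $p^m$, accounting for the first and last lines of Tables \ref{5} and \ref{6}.

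Next I would evaluate $\sum_{y\in\mathbb{F}_p^*}\zeta_p^{yh}S(ya,yb)$ using Lemma \ref{polynomial}. The key point is that $S(ya,yb)$ depends on $ya$ through the permutation-polynomial dichotomy: write $S(ya,yb)=\varepsilon p^{m/2}\chi_1(-ya\,x_{ya,yb}^{p^l+1})$ (or the $m$ odd analogue with $\sqrt{p^*}^m\eta(ya)$, using Lemma \ref{Qu-Fea} to turn $\eta(ya)$ into $\eta'(y)\eta(a)$), where $\chi_1(-ya\,x_{ya,yb}^{p^l+1})=\zeta_p^{Tr(-ya\,x_{ya,yb}^{p^l+1})}=\zeta_p^{y\beta}$ for a field element $\beta=-a\,x_{a,b}^{p^{l}+1}$-type quantity that is independent of $y$ (this homogeneity is exactly why the substitution $x\mapsto$ scaled $x$ in the defining equation works). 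Then $\sum_{y\in\mathbb{F}_p^*}\zeta_p^{yh}\cdot(\text{const})\cdot\zeta_p^{y\beta}$ or $\sum_{y}\zeta_p^{yh}\eta'(y)\zeta_p^{y\beta}\sqrt{p^*}^m\eta(a)$ is a Gauss-sum-type expression over $\mathbb{F}_p$ which evaluates, via Lemma \ref{Gauss} and Lemma \ref{Cyclotomic fields}(3), to an explicit integer depending only on whether $h+\beta$ (resp. $\beta+h$) is zero, a nonzero square, or a nonsquare in $\mathbb{F}_p$. Substituting back yields a finite list of possible weights, which I would check coincide with the weight columns of Tables \ref{5} and \ref{6}.

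For the multiplicities I would count, for each target weight, the number of triples $(a,b,h)$ producing it. This reduces to counting pairs $(a,b)$ landing in each of the three Lemma \ref{polynomial} regimes and, within each, the distribution of the resulting $\beta\in\mathbb{F}_p$, then multiplying by the number of $h\in\mathbb{F}_p$ of the appropriate quadratic type. The regime counts come from Lemma \ref{solution}: for fixed $a$ with $a^{(q-1)/(p+1)}=(-1)^{m/2}$ the map $x\mapsto a^{p^l}x^{p^{2l}}+ax$ has kernel of size $p^2$, so each attained value of $-b^{p^l}$ has $p^2$ preimages, etc.; for $m$ odd the map is always a bijection. The equidistribution of $\beta$ over $\mathbb{F}_p$ (needed to split a weight class into its square/nonsquare/zero sub-parts) should follow from a second-moment / orthogonality argument, or alternatively can be bypassed entirely by invoking the first two Pless power moments $\sum_i A_i = p^{2m+1}$ and $\sum_i i A_i = (p-1)p^{m-1}\cdot p^{2m+1}/p = (p-1)p^{3m}$ (the code being, up to the all-one-ish coordinate, "balanced"), together with $\sum_i i^2 A_i$ obtained from $\sum_{a,b}|S(a,b)|^2$ and $\sum_{a,b}S(a,b)$; solving the resulting linear system pins the remaining multiplicities. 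I expect the main obstacle to be exactly this bookkeeping: correctly handling the extended coordinate and the $h$-shift so the weights land on the nose, and carrying out the regime-by-regime counting in the $m$ even case (Table \ref{6}), where the $(-1)^{m/2}$ signs and the $p+1$ denominators make sign errors easy. Since the claim is essentially the $c=0$ specialization of the machinery used for Theorem \ref{weight1}, I would in practice derive it as a corollary of that computation, restricting the triple sum to $b'=0$, which also gives a free consistency check via the known parameters in the examples.
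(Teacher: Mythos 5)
Your proposal follows essentially the same route as the paper's proof in Appendix II: express the weight as $p^m-T(a,b,h)$, expand $T(a,b,h)=p^{m-1}+\frac{1}{p}\sum_{y\in\mathbb{F}_p^*}\zeta_p^{hy}S(ya,yb)$ via the Galois action $\sigma_y$, evaluate $S(ya,yb)$ with Coulter's results (Lemmas \ref{polynomial} and \ref{solution}), and count the resulting regimes. Two of your worries are moot in practice: the code is already defined with coordinates indexed by all of $\mathbb{F}_q$, so there is no separate extended coordinate to handle, and the square/nonsquare/zero split comes for free from letting $h$ range over $\mathbb{F}_p$ for each fixed $(a,b)$, so no second-moment or power-moment argument is needed.
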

One can see that the code ${\overline{{\mathcal{C}_2}^{\bot}}}^{\bot}$ is four-weight if $m$ is odd and six-weight if $m$ is even. Similarly, the proof of Theorem~\ref{weight2} is also put in Appendix II.

%\begin{example}\label{example5}
%If $(p,m)=(3,5)$, then the code ${\overline{{\mathcal{C}_2}^{\bot}}}^{\bot}$ has parameters $[243,11,153]$ and weight enumerator $1+58806z^{153}+59532z^{162}+58806z^{171}+2z^{243},$ which is verified by a Magma program.
%\end{example}

\begin{example}\label{example6}
If $(p,m)=(5,3)$, then the code ${\overline{{\mathcal{C}_2}^{\bot}}}^{\bot}$ has parameters $[125,7,95]$ and weight enumerator $1+31000z^{95}+16120z^{100}+31000z^{105}+4z^{125},$ which confirms the results in Theorem \ref{weight2}.
\end{example}

\begin{example}\label{example7}
If $(p,m)=(3,4)$, then the code ${\overline{{\mathcal{C}_2}^{\bot}}}^{\bot}$ has parameters $[81,9,45]$ and weight enumerator $1+360z^{45}+4860z^{48}+4560z^{54}+9720z^{57}+180z^{72}+2z^{81},$ which confirms the results in Theorem \ref{weight2}.
\end{example}

%\begin{example}\label{example8}
%If $(p,m)=(3,6)$, then the code ${\overline{{\mathcal{C}_2}^{\bot}}}^{\bot}$ has parameters $[729,13,432]$ and weight enumerator $1+14742z^{432}+796068z^{477}+355992z^{486}+398034z^{504}+29484z^{513}+2z^{729},$ which is verified by a Magma program.
%\end{example}

\section{Infinite families of $2$-Designs}\label{section-4}

In the following, we derive $2$-designs from  the codes presented in Section \ref{theorem}. To this end, we first prove that these codes are both affine-invariant.

\begin{lemma}\label{affine invariant}
The extended codes  $\overline{{\mathcal{C}_1}^{\bot}}$ and $\overline{{\mathcal{C}_2}^{\bot}}$ are affine-invariant.
\end{lemma}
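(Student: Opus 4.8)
The plan is to apply Lemma~\ref{Kasami-Lin-Peterson} directly. By Lemma~\ref{The dual of an affine-invariant code}, it suffices to show that the extended codes $\overline{{\mathcal{C}_1}}$ and $\overline{{\mathcal{C}_2}}$ are affine-invariant, since the codes we are interested in arise by taking duals; alternatively, one checks the defining sets of $\overline{{\mathcal{C}_1}^{\bot}}$ and $\overline{{\mathcal{C}_2}^{\bot}}$ themselves. Either way the key is to identify the relevant defining set $\overline{T}$ explicitly and verify the downward-closure condition: whenever $s\in\overline{T}$ and $r\preceq s$ in the $p$-adic partial order on $\{0,1,\ldots,n\}$, then $r\in\overline{T}$.

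First I would recall that ${\mathcal{C}_1}^{\bot}$ and ${\mathcal{C}_2}^{\bot}$, being the duals of the trace codes in Eqs.~(\ref{code-1}) and (\ref{code-2}), are cyclic codes of length $n=p^m-1$ whose defining sets are built from the $p$-cyclotomic cosets $C_1$, $C_2$, and $C_{p^l+1}$ (for ${\mathcal{C}_1}$) and $C_1$, $C_{p^l+1}$ (for ${\mathcal{C}_2}$). Dualizing replaces the defining set $T$ by the complement $\{0,1,\ldots,n-1\}\setminus(-T \bmod n)$, and extending adjoins the coordinate $0$ and the element $0$ (equivalently $n$) to the defining set in the standard way. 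So the first real step is to write down $\overline{T_1}$ and $\overline{T_2}$ as concrete unions of cyclotomic cosets: I expect $\overline{T}$ to consist of $0$ together with all $s\in\{1,\ldots,n\}$ whose $p$-adic weight (the sum of digits $s_i$) is large enough, or more precisely all $s$ \emph{except} those lying in the few small cosets $C_1, C_2, C_{p^l+1}$ and possibly their "downsets." The point of the Feng--Luo setup with $\gcd(m,l)=1$ is exactly that $x^{p^l+1}$, $x^2$, $x$ have exponents of $p$-adic weight at most $2$, so the complementary defining set is a union of cosets closed under $\preceq$.

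The second step is the verification of downward closure. Here I would use the structural fact that the $p$-adic partial order $r\preceq s$ only \emph{decreases} the digit sum, hence decreases the Hamming-in-base-$p$ weight; and that membership in $C_1, C_2, C_{p^l+1}$ is governed by having $p$-adic weight $1$ (for $C_1, C_2$ since $2=p^{?}$ only in characteristic $2$, so actually $C_2$ has weight $1$ as a single digit $2<p$) or weight $2$ (for $C_{p^l+1}$). The claim reduces to: if $s$ is \emph{not} in the "forbidden" small cosets and $r\preceq s$, then $r$ is also not forbidden — which follows because passing to $r\preceq s$ cannot create a weight-$1$ or weight-$(\le 2)$ exponent out of something that was genuinely of higher weight, once one treats the handful of boundary exponents carefully (the cases where $s$ has weight exactly $3$ and $r$ drops to weight $2$ must be checked to still land outside $C_{p^l+1}$, using $\gcd(m,l)=1$ to pin down which weight-$2$ cosets actually equal $C_{p^l+1}$).

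I expect the main obstacle to be precisely this bookkeeping at the boundary: correctly describing the defining set of the dual-then-extended code and then checking that the $\preceq$-minimal "dangerous" exponents — those just above the forbidden set — do not collapse into $C_1\cup C_2\cup C_{p^l+1}$ under the digit-domination order. This is where $\gcd(m,l)=1$ is essential, since it guarantees $p^l+1$ generates a coset of full size and that the weight-$2$ exponents of the form $p^i+p^j$ are controlled. Once the defining set is pinned down and the weight/domination argument is in place, invoking Lemma~\ref{Kasami-Lin-Peterson} finishes both cases simultaneously, and then Lemma~\ref{The dual of an affine-invariant code} transfers affine-invariance to ${\overline{{\mathcal{C}_1}^{\bot}}}^{\bot}$ and ${\overline{{\mathcal{C}_2}^{\bot}}}^{\bot}$ for use with Theorem~\ref{2-design}.
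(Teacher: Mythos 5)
Your overall strategy (identify the defining set of the extended code and verify the downward-closure condition of Lemma~\ref{Kasami-Lin-Peterson}) is the same as the paper's, but you have identified the wrong defining set, and with your identification the verification cannot be completed. Since $\mathcal{C}_1$ is the cyclic code with parity-check polynomial $h_1(x)h_2(x)h_3(x)$, its dual ${\mathcal{C}_1}^{\bot}$ has generator polynomial the reciprocal of $h_1(x)h_2(x)h_3(x)$; hence the defining set of ${\mathcal{C}_1}^{\bot}$ is the \emph{small} set $T=C_1\cup C_2\cup C_{p^l+1}$ (three cyclotomic cosets, about $3m$ elements), not its complement. You instead apply the dualization rule $T\mapsto\{0,\ldots,n-1\}\setminus(-T)$ one time too many and arrive at $\overline{T}=\{0\}\cup\bigl(\{1,\ldots,n\}\setminus(C_1\cup C_2\cup C_{p^l+1})\bigr)$, which is the defining set of a code of dimension about $3m$ --- that is, of $\mathcal{C}_1$ (equivalently, of the trace code itself), not of its dual. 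This is not a harmless bookkeeping issue: the complement set is \emph{not} closed downward under $\preceq$. Any $s$ of $p$-adic weight at least $2$ lying outside $C_1\cup C_2\cup C_{p^l+1}$ dominates some $r=p^j$ of weight one, and $p^j\in C_1$ is excluded from your $\overline{T}$. So the ``boundary bookkeeping'' you flag as the main obstacle is not merely delicate --- the closure condition you would need is simply false, and no amount of care with $\gcd(m,l)=1$ or with the weight-$2$ cosets repairs it.

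The paper's proof works precisely because the correct set $\overline{T}=\{0\}\cup C_1\cup C_2\cup C_{p^l+1}$ is tiny and trivially downward closed: every $s\in C_1\cup C_2$ has a single nonzero $p$-adic digit (equal to $1$ or $2$), so any nonzero $r\preceq s$ again has a single nonzero digit in the same position and lies in $C_1\cup C_2$; and every $s\in C_{p^l+1}$ has two digits equal to $1$, so a nonzero $r\preceq s$ is either $s$ itself or a power of $p$, hence lies in $C_{p^l+1}\cup C_1$. No analysis of weight-$3$ exponents is needed. A second, smaller slip: your opening reduction ``it suffices to show that $\overline{\mathcal{C}_1}$ and $\overline{\mathcal{C}_2}$ are affine-invariant'' does not work, because extension and dualization do not commute --- $\overline{{\mathcal{C}_1}^{\bot}}$ is not the dual of $\overline{\mathcal{C}_1}$, so Lemma~\ref{The dual of an affine-invariant code} cannot transfer affine-invariance between them; that lemma is used only afterwards, to pass from $\overline{{\mathcal{C}_i}^{\bot}}$ to ${\overline{{\mathcal{C}_i}^{\bot}}}^{\bot}$.
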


%\emph{Proof.}
\begin{proof}
We will prove the conclusion by Lemma \ref{Kasami-Lin-Peterson}.
The defining set $T$ of the cyclic code ${\mathcal{C}_1}^{\bot}$ is $T =C_1 \cup C_2 \cup C_{p^l+1}$. Since $0 \not \in T$, the defining set $\overline{T}$ of $\overline{{\mathcal{C}_1}^{\bot}}$ is given by $\overline{T} = C_1 \cup C_2 \cup C_{p^l+1} \cup \{0\}$.
Let $s \in \overline{T} $ and $r \in \mathcal{P}$. Assume that $r \preceq s$. We need to prove that $r \in \overline{T}$ by Lemma \ref{Kasami-Lin-Peterson}.

If $r=0,$ then obviously $r\in \overline{T}$. Consider now the case $r>0$. If $s \in  C_1 \cup C_2$, then the Hamming weight $wt(s) = 1.$ Since  $r \preceq s$, $wt(r) = 1.$ Consequently, $r \in C_1 \cup C_2 \subset  \overline{T}.$  If $s  \in C_{p^l+1}$, then the Hamming weight $wt(s) = 2.$ Since $r \preceq s$, either $wt(r) = 1$ or $r = s.$ In both cases, $r \in  \overline{T}.$ The desired conclusion then follows
from Lemma \ref{Kasami-Lin-Peterson}.

Similarly, we can prove that $\overline{{\mathcal{C}_2}^{\bot}}$ is affine-invariant.

Thus the proof is completed.
\end{proof}
%\hfill $\square$

By Lemmas \ref{The dual of an affine-invariant code} and \ref{affine invariant}, we know both ${\overline{{\mathcal{C}_1}^{\bot}}}^{\bot}$ and ${\overline{{\mathcal{C}_2}^{\bot}}}^{\bot}$ are affine-invariant. Thus we have the following result by  Theorems \ref{2-design}.

\begin{theorem}\label{$2-$design-1}
Let $m\geq 3$ be a positive integer. Then the supports of the codewords of weight $i>0$ in ${\overline{{\mathcal{C}_1}^{\bot}}}^{\bot}$ or  ${\overline{{\mathcal{C}_2}^{\bot}}}^{\bot}$ form a $2$-design, provided that $A_i\neq0.$
\end{theorem}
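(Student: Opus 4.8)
The statement is an almost immediate consequence of the machinery already assembled in the excerpt, so the plan is to chain together the results already proved. First I would invoke Lemma~\ref{affine invariant}, which establishes that the extended cyclic codes $\overline{{\mathcal{C}_1}^{\bot}}$ and $\overline{{\mathcal{C}_2}^{\bot}}$ are affine-invariant. Then I would apply Lemma~\ref{The dual of an affine-invariant code}, which says that the dual of an affine-invariant code (of length $n+1$) is again affine-invariant; applying this to $\overline{{\mathcal{C}_1}^{\bot}}$ and $\overline{{\mathcal{C}_2}^{\bot}}$ shows that ${\overline{{\mathcal{C}_1}^{\bot}}}^{\bot}$ and ${\overline{{\mathcal{C}_2}^{\bot}}}^{\bot}$ are affine-invariant codes over $\mathbb{F}_p$.

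Next, with affine-invariance of ${\overline{{\mathcal{C}_1}^{\bot}}}^{\bot}$ and ${\overline{{\mathcal{C}_2}^{\bot}}}^{\bot}$ in hand, I would apply Theorem~\ref{2-design}: for each $i$ with $A_i\neq 0$ in an affine-invariant code, the supports of the codewords of weight $i$ form a $2$-design. Specializing to $i>0$ (the case $i=0$ being vacuous, and $A_0=1$ corresponds to the all-zero codeword whose support is empty) gives exactly the claimed conclusion for both codes. The hypothesis $m\geq 3$ is inherited from the setting in which the weight distributions (Theorems~\ref{weight1} and~\ref{weight2}) are valid, ensuring the codes are the nondegenerate objects described there, though logically the $2$-design property follows from affine-invariance alone regardless of $m$.

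The only subtlety worth spelling out — and the step I would treat most carefully — is that Theorem~\ref{2-design} and Lemma~\ref{The dual of an affine-invariant code} are stated for codes of length $p^m=n+1$, so one must check the length bookkeeping: $\mathcal{C}_1$ and $\mathcal{C}_2$ (hence ${\mathcal{C}_1}^{\bot}$ and ${\mathcal{C}_2}^{\bot}$) have length $n=p^m-1$, their extended codes $\overline{{\mathcal{C}_1}^{\bot}}$ and $\overline{{\mathcal{C}_2}^{\bot}}$ have length $n+1=p^m$, and the dual operation preserves this length. Thus all the cited results apply with the correct parameters. There is no genuine obstacle here; the work of the paper lies entirely in Lemma~\ref{affine invariant} (the defining-set computation) and in the weight-distribution theorems, both of which are already available to us. Assembling them yields Theorem~\ref{$2-$design-1}.

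\begin{proof}
By Lemma~\ref{affine invariant}, the extended cyclic codes $\overline{{\mathcal{C}_1}^{\bot}}$ and $\overline{{\mathcal{C}_2}^{\bot}}$, both of length $n+1=p^m$ over $\mathbb{F}_p$, are affine-invariant. By Lemma~\ref{The dual of an affine-invariant code}, their duals ${\overline{{\mathcal{C}_1}^{\bot}}}^{\bot}$ and ${\overline{{\mathcal{C}_2}^{\bot}}}^{\bot}$ are therefore also affine-invariant codes over $\mathbb{F}_p$ of length $n+1$. Applying Theorem~\ref{2-design} to each of these two affine-invariant codes, we conclude that for every $i$ with $A_i\neq 0$ the supports of the codewords of weight $i$ form a $2$-design. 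In particular, for each $i>0$ with $A_i\neq 0$ in ${\overline{{\mathcal{C}_1}^{\bot}}}^{\bot}$ or in ${\overline{{\mathcal{C}_2}^{\bot}}}^{\bot}$, the supports of the codewords of weight $i$ form a $2$-design. This completes the proof.
\end{proof}
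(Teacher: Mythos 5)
Your proof is correct and follows exactly the paper's own route: Lemma~\ref{affine invariant} gives affine-invariance of $\overline{{\mathcal{C}_1}^{\bot}}$ and $\overline{{\mathcal{C}_2}^{\bot}}$, Lemma~\ref{The dual of an affine-invariant code} transfers it to the duals, and Theorem~\ref{2-design} yields the $2$-designs. The extra remarks on length bookkeeping are harmless and only make the argument more explicit.
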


The parameters of the 2-designs derived from ${\overline{{\mathcal{C}_1}^{\bot}}}^{\bot}$ and ${\overline{{\mathcal{C}_2}^{\bot}}}^{\bot}$ are given in  Theorems \ref{parameter-1},~\ref{parameter-2} and \ref{parameter-3}, respectively. We only give the proof of Theorem \ref{parameter-1}, since Theorems \ref{parameter-2} and \ref{parameter-3} can be proved with similar arguments.

%By Theorems \ref{design parameter}, \ref{$2-$design-1} and Eq.(\ref{condition}), for ${\overline{{\mathcal{C}_1}^{\bot}}}^{\bot}$, we deduce the following.

\begin{theorem}\label{parameter-1}
Let $m$ be an odd integer and $\mathcal{B}$ be the set of the supports of the codewords of ${\overline{{\mathcal{C}_1}^{\bot}}}^{\bot}$ with weight $i,$ where $A_i\neq 0.$ Then for $m\geq 5$, ${\overline{{\mathcal{C}_1}^{\bot}}}^{\bot}$ holds $2$-$(p^m, i, \lambda)$ designs for the following pairs:
\begin{itemize}
\item $(i, \lambda)=(p^m-p^{m-1}-p^{\frac{m+1}{2}}, p^{\frac{m-3}{2}}(p^{\frac{m-1}{2}}-p^{\frac{m-3}{2}}-1)(p^{m-1}-1)(p^m-p^{m-1}-p^{\frac{m+1}{2}}-1)/2(p^2-1)).$
\item $(i, \lambda)=(p^m-p^{m-1}-p^{\frac{m-1}{2}}(p-1), p^{m-2}(p^{m-1}-1)(p^m-p^{m-1}-p^{\frac{m+1}{2}}+p^{\frac{m-1}{2}}-1)/2).$
\item $(i, \lambda)=(p^m-p^{m-1}-p^{\frac{m-1}{2}}, p^{\frac{m-1}{2}}(p^{\frac{m+1}{2}}-p^{\frac{m-1}{2}}-1)(p^m-p^{m-1}-p^{\frac{m-1}{2}}-1)(p^{m+2}-p^{m+1}-p^{m-2}-p^{\frac{m+1}{2}}
    +p^{\frac{m-3}{2}}+p^2)/2(p^2-1)).$
\item $(i, \lambda)=(p^m-p^{m-1}, (p^m-p^{m-1}-1)(2p^{2m-1}-2p^{2m-2}+p^{2m-3}-p^{2m-4}+p^{m-1}-p^{m-2}+2)).$
\item $(i, \lambda)=(p^m-p^{m-1}+p^{\frac{m-1}{2}}, p^{\frac{m-1}{2}}(p^{\frac{m+1}{2}}-p^{\frac{m-1}{2}}+1)(p^m-p^{m-1}+p^{\frac{m-1}{2}}-1)(p^{m+2}-p^{m+1}-p^{m-2}+p^{\frac{m+1}{2}}
    -p^{\frac{m-3}{2}}+p^2)/2(p^2-1)).$
\item $(i, \lambda)=(p^m-p^{m-1}+p^{\frac{m+1}{2}}, p^{\frac{m-3}{2}}(p^{\frac{m-1}{2}}-p^{\frac{m-3}{2}}+1)(p^m-p^{m-1}+p^{\frac{m+1}{2}}-1)(p^{m-1}-1)/2(p^2-1)).$
\item $(i, \lambda)=(p^m-p^{m-1}+p^{\frac{m-1}{2}}(p-1), p^{m-2}(p^{m-1}-1)[p^m-p^{m-1}+p^{\frac{m-1}{2}}(p-1)-1]/2).$
\end{itemize}
For $m=3$, ${\overline{{\mathcal{C}_1}^{\bot}}}^{\bot}$ also holds $2$-$(p^m, i, \lambda)$ designs for the following pairs:
\begin{itemize}
\item $(i, \lambda)=(p^3-2p^2, (p-2)(p-2)(p^3-2p^2-1)/2(p^2-1)).$
\item $(i, \lambda)=(p^3-2p^2+p, p(p^2-1)(p^3-2p^2+p-1)/2).$
\item $(i, \lambda)=(p^3-2p^2-p, p(p^2-p-1)(p^3-p^2-p-1)(p^5-p^4-p+1)/2(p^2-1)).$
\end{itemize}
\end{theorem}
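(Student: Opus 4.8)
The plan is to read the weight distribution of ${\overline{{\mathcal{C}_1}^{\bot}}}^{\bot}$ off Theorem~\ref{weight1} (Table~\ref{3}) and then combine it with Theorems~\ref{2-design} and~\ref{design parameter} and the counting identity~(\ref{condition}). Since ${\overline{{\mathcal{C}_1}^{\bot}}}^{\bot}$ is affine-invariant (Lemmas~\ref{affine invariant} and~\ref{The dual of an affine-invariant code}), Theorem~\ref{2-design} already guarantees that for every $i$ with $A_i\neq 0$ the supports of the weight-$i$ codewords form a $2$-$(p^m,i,\lambda)$ design; the whole task is therefore to identify $\lambda$, equivalently the number $b$ of distinct blocks, for each admissible value of $i$.

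First I would extract from Table~\ref{3} the minimum distance $d=p^m-p^{m-1}-p^{\frac{m+1}{2}}$, which is the smallest nonzero weight because $p^{\frac{m+1}{2}}>p^{\frac{m-1}{2}}(p-1)$. Next I would compute the integer $w$ of Theorem~\ref{design parameter}: writing $w=(p-1)s+r$ with $0\le r\le p-2$, one checks that $w-\lfloor\frac{w+p-2}{p-1}\rfloor$ equals $(p-2)s+r-1$ when $r\ge 1$ and $(p-2)s$ when $r=0$, so solving the inequality $w-\lfloor\frac{w+p-2}{p-1}\rfloor<d$ for the largest admissible $w$ gives a closed form. The crucial step is then to compare $w$ with the largest nontrivial weight $p^m-p^{m-1}+p^{\frac{m+1}{2}}$; using that $w$ is essentially $\frac{p-1}{p-2}d$, the comparison reduces to the elementary inequality $p^{\frac{m-3}{2}}\ge\frac{2p-3}{p-1}$, which holds for $m\ge 5$ and fails for $m=3$. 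This is precisely why the statement splits into two cases: for $m\ge 5$ every nontrivial weight $i<p^m$ satisfies $d\le i\le w$, whereas for $m=3$ only the three smallest nontrivial weights do.

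For each admissible weight $i$, Theorem~\ref{design parameter} guarantees that two weight-$i$ codewords with equal support are $\mathbb{F}_p^*$-multiples of one another, so each support is the support of exactly $p-1$ codewords and hence $b=A_i/(p-1)$. Substituting into~(\ref{condition}) with $t=2$, $v=p^m$, $k=i$ gives $\lambda=b\binom{i}{2}/\binom{p^m}{2}=A_i\, i(i-1)/\bigl((p-1)p^m(p^m-1)\bigr)$, and plugging in the weights and multiplicities from Table~\ref{3} and simplifying produces the seven listed pairs $(i,\lambda)$; the $m=3$ part is the same computation, specialized to $\frac{m+1}{2}=m-1$ (where several of the expressions collapse), applied to the three surviving weights. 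The hardest part will be pinning down $w$ exactly and verifying $i\le w$ for every relevant weight: this is what legitimizes the $(p-1)$-to-$1$ correspondence between weight-$i$ codewords and blocks, and it is exactly where the dichotomy between $m=3$ and $m\ge 5$ is forced; after that, only a long but entirely routine simplification of $\lambda$ for each weight remains.
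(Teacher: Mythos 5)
Your proposal is correct and follows essentially the same route as the paper: affine-invariance (via Lemma~\ref{affine invariant}, Lemma~\ref{The dual of an affine-invariant code} and Theorem~\ref{2-design}) gives the $2$-design property, Theorem~\ref{design parameter} gives the block count $b=A_i/(p-1)$, and Eq.~(\ref{condition}) yields $\lambda$ from Table~\ref{3}. In fact you are more explicit than the paper's one-line proof about the one genuinely delicate point, namely verifying $i\le w$ in Theorem~\ref{design parameter}, which is exactly what forces the split between $m\ge 5$ and $m=3$ (where $w=p^3-p^2-1$ admits only the three smallest nontrivial weights, the third of which should read $p^3-p^2-p$).
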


%\emph{Proof.}
\begin{proof}
By Theorem \ref{design parameter}, one can prove that the number of supports of all codewords with weight $i\neq 0$ in the code ${\overline{{\mathcal{C}_1}^{\bot}}}^{\bot}$ is equal to $A_i/(p-1)$ for each $i,$ where $A_i$ is given in Table \ref{3}. Then the desired conclusions follow from Theorem \ref{$2-$design-1} and  Eq.(\ref{condition}). The proof is then completed.
\end{proof}
%\hfill$\square$

\begin{example}\label{example-9}
If $(p,m)=(3,3)$, then the code ${\overline{{\mathcal{C}_1}^{\bot}}}^{\bot}$ has parameters $[27, 10, 9]$ and the weight distribution is given in Example \ref{example1}. It holds $2$-$(27, i, \lambda)$ designs with the following pairs $(i, \lambda):$
$$(9, 4), (12, 132), (15, 2100),$$
which confirms  the results in Theorem \ref{parameter-1}.
\end{example}

\begin{theorem}\label{parameter-2}
Let $m$ be an even integer and $\mathcal{B}$ be the set of the supports of the codewords of ${\overline{{\mathcal{C}_1}^{\bot}}}^{\bot}$ with weight $i,$ where $A_i\neq 0.$ Then for $m\geq 6$, ${\overline{{\mathcal{C}_1}^{\bot}}}^{\bot}$ holds $2$-$(p^m, i, \lambda)$ designs for the following pairs:
\begin{itemize}
\item $(i, \lambda)=(p^m-p^{m-1}, (p^m-p^{m-1}-1)(p^{2m-1}-p^{2m-2}+2p^{2m-3}-p^{m-2}+1)).$
\item $(i, \lambda)=(p^m-p^{m-1}+p^{\frac{m}{2}}-p^{\frac{m-2}{2}}, p^{\frac{m+2}{2}}[p^m-p^{m-1}+p^{\frac{m-2}{2}}(p-1)-1](p^{\frac{m}{2}}+1)(p^m-p^{m-1}-p^{m-2}-p^{\frac{m}{2}}+p^{\frac{m-2}{2}}+1)/2(p^2-1)).$
\item $(i, \lambda)=(p^m-p^{m-1}-p^{\frac{m}{2}}+p^{\frac{m-2}{2}}, p^{\frac{m+2}{2}}[p^m-p^{m-1}-p^{\frac{m-2}{2}}(p-1)-1](p^{\frac{m}{2}}-1)(p^m-p^{m-1}-p^{m-2}+p^{\frac{m}{2}}-p^{\frac{m-2}{2}}+1)/2(p^2-1)).$
\item $(i, \lambda)=(p^m-p^{m-1}-p^{\frac{m+2}{2}}+p^{\frac{m}{2}}, p^{\frac{m}{2}-2}(p^{m-2}-1)(p^{\frac{m}{2}}-1)(p^m-p^{m-1}-p^{\frac{m+2}{2}}+p^{\frac{m}{2}}-1)/2(p^2-1)).$
\item $(i, \lambda)=(p^m-p^{m-1}+p^{\frac{m}{2}-1}, p^{\frac{m+2}{2}}(p^m-p^{m-1}+p^{\frac{m-2}{2}}-1)(p^{\frac{m}{2}+1}-p^{\frac{m}{2}}+1)(p^m-p^{m-1}-p^{m-2}+p^{\frac{m}{2}}-p^{\frac{m-2}{2}}+1)
    /2(p^2-1)).$
\item $(i, \lambda)=(p^m-p^{m-1}+p^{\frac{m}{2}}, p^{\frac{m}{2}-2}(p^m-p^{m-1}+p^{\frac{m}{2}}-1)(p^{\frac{m}{2}}-p^{\frac{m}{2}-1}+1)(p^{m+2}-p^m+p^{m-1}+p^{\frac{m}{2}}-p^{\frac{m}{2}-1}-1)
    /2(p^2-1)).$
\item $(i, \lambda)=(p^m-p^{m-1}-p^{\frac{m}{2}-1}, p^{\frac{m+2}{2}}(p^m-p^{m-1}-p^{\frac{m-2}{2}}-1)(p^{\frac{m}{2}+1}-p^{\frac{m}{2}}-1)(p^m-p^{m-1}-p^{m-2}-p^{\frac{m}{2}}+p^{\frac{m-2}{2}}+1)
    /2(p^2-1)).$
\item $(i, \lambda)=(p^m-p^{m-1}-p^{\frac{m}{2}}, p^{\frac{m}{2}-2}(p^m-p^{m-1}-p^{\frac{m}{2}}-1)(p^{\frac{m}{2}}-p^{\frac{m}{2}-1}-1)(p^{m+2}-p^m+p^{m-1}-p^{\frac{m}{2}}+p^{\frac{m}{2}-1}-1)
    /2(p^2-1)).$
\item $(i, \lambda)=(p^m-p^{m-1}+p^{\frac{m}{2}}(p-1), p^{\frac{m}{2}-2}(p^{m-2}-1)(p^{\frac{m}{2}}+1)[p^m-p^{m-1}+p^{\frac{m}{2}}(p-1)-1]/2(p^2-1)).$
\end{itemize}
Moreover, for $m=4$, ${\overline{{\mathcal{C}_1}^{\bot}}}^{\bot}$ also holds $2$-$(p^4, i, \lambda)$ designs for the first eight pairs as above except for the last one. %:
\end{theorem}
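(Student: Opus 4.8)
The plan is to mimic exactly the proof of Theorem \ref{parameter-1}, now feeding in the weight distribution of ${\overline{{\mathcal{C}_1}^{\bot}}}^{\bot}$ for even $m$ recorded in Table \ref{4}. First I would invoke Theorem \ref{$2-$design-1}, which guarantees that for every nonzero weight $i$ with $A_i\neq 0$ the supports of the weight-$i$ codewords form a $2$-$(p^m,i,\lambda)$ design; it remains only to pin down $\lambda$ and to check which weights $i$ actually fall in the admissible range. To compute $\lambda$ I would use the standard counting identity \eqref{condition}, which here reads $b\binom{i}{2}=\lambda\binom{p^m}{2}$, so that $\lambda = b\,\frac{i(i-1)}{p^m(p^m-1)}$, where $b=|\mathcal{B}|$ is the number of distinct supports of weight-$i$ codewords.

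The key intermediate step is to show $b=A_i/(p-1)$, i.e.\ that no two weight-$i$ codewords of ${\overline{{\mathcal{C}_1}^{\bot}}}^{\bot}$ with the same support are $\mathbb{F}_p$-linearly independent. This is precisely where Theorem \ref{design parameter} enters: one computes the largest integer $w\le n+1$ with $w-\lfloor\frac{w+p-2}{p-1}\rfloor<d$, where $d$ is the minimum weight read off from Table \ref{4} (namely $d=p^{\frac{m-2}{2}}(p^{\frac{m+2}{2}}-p^{\frac{m}{2}}-p+1)$, the smallest listed nonzero weight), and checks that all the relevant weights $i$ satisfy $d\le i\le w$. Then Theorem \ref{design parameter} forces any two weight-$i$ codewords with equal support to be scalar multiples of each other, so each support is counted exactly $p-1$ times among the $A_i$ codewords, giving $b=A_i/(p-1)$. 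Substituting the value of $A_i$ from Table \ref{4} into $\lambda = \frac{A_i}{p-1}\cdot\frac{i(i-1)}{p^m(p^m-1)}$ and simplifying yields the nine displayed pairs $(i,\lambda)$; each of the nine nonzero weights in Table \ref{4} other than $i=p^m$ contributes one pair (the weight $p^m$ codewords are the $p-1$ constant codewords, whose support is all of $\mathcal{P}$, i.e.\ the trivial design, and are excluded).

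For the case $m=4$ I would argue separately that the range condition $d\le i\le w$ fails for exactly one of the nine weights — the one corresponding to $i=p^m-p^{m-1}+p^{\frac{m}{2}}(p-1)$, which for small $m$ is too close to $n+1$ relative to $d$ — so that Theorem \ref{design parameter} no longer guarantees $b=A_i/(p-1)$ for that weight, while the first eight pairs survive; hence the stated conclusion that only the first eight pairs hold when $m=4$. The routine but slightly delicate part is the algebraic simplification: one must verify that $\frac{A_i\,i(i-1)}{(p-1)p^m(p^m-1)}$ collapses to the clean closed forms listed, which requires carefully factoring the degree-six-or-so polynomials in $p$ appearing in the multiplicities of Table \ref{4}; I expect this bookkeeping, together with the verification of the range inequality $w-\lfloor\frac{w+p-2}{p-1}\rfloor<d$ for each weight and each parity regime of $m$, to be the main obstacle, but it is entirely mechanical and can be cross-checked against Examples \ref{example3} for $(p,m)=(3,4)$.
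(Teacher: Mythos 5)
Your strategy is exactly the paper's: the authors prove only Theorem \ref{parameter-1} and dispose of Theorem \ref{parameter-2} with the remark that it ``can be proved with similar arguments'', and that argument is precisely your chain --- affine-invariance gives the $2$-design property (Theorem \ref{$2-$design-1}), Theorem \ref{design parameter} gives $b=A_i/(p-1)$, and Eq.~(\ref{condition}) gives $\lambda=\frac{A_i}{p-1}\cdot\frac{i(i-1)}{p^m(p^m-1)}$ from Table \ref{4}. There is, however, one concrete error in your execution: you take the minimum distance for even $m$ to be $d=p^{\frac{m-2}{2}}(p^{\frac{m+2}{2}}-p^{\frac{m}{2}}-p+1)=p^m-p^{m-1}-p^{\frac{m}{2}}+p^{\frac{m-2}{2}}$, calling it the smallest listed nonzero weight. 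It is not; the smallest nonzero weight in Table \ref{4} is $p^{\frac{m}{2}}(p^{\frac{m}{2}}-p^{\frac{m-2}{2}}-p+1)=p^m-p^{m-1}-p^{\frac{m}{2}}(p-1)$. For $(p,m)=(3,4)$ the correct value is $36$, consistent with the parameters $[81,13,36]$ in Example \ref{example3}, whereas your candidate equals $48$.

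This is not cosmetic, because $d$ is the sole input to the computation of $w$ in Theorem \ref{design parameter}, and the entire $m=4$ exception hinges on it. With the correct $d=36$ at $(p,m)=(3,4)$ one finds $w=71$, so the weight $i=p^m-p^{m-1}+p^{\frac{m}{2}}(p-1)=72$ falls outside the range $d\le i\le w$ and the last pair must be dropped, exactly as the statement asserts; for even $m\ge 6$ one checks that $w$ reaches the full length $p^m$ (since $p^{m-2}+\cdots+p+2>p^{\frac{m}{2}+1}-p^{\frac{m}{2}}$ when $m\ge 6$) and all nine pairs survive. With your too-large value $d=48$, the inequality $w-\lfloor\frac{w+p-2}{p-1}\rfloor<d$ is already satisfied by $w=81$, so your own range check would wrongly admit all nine weights for $m=4$, contradicting both the clause you are trying to prove and the paper's concluding remarks, which single out $i=p^2(p^2-1)$ as the problematic weight when $m=4$. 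Once $d$ is corrected, your argument goes through and coincides with the paper's.
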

%
%\emph{Proof.} The proof is similar to that of Theorem \ref{parameter-1}, thus we omit it here. \hfill$\square$

\begin{example}\label{example-10}
If $(p,m)=(3,4)$, then the code ${\overline{{\mathcal{C}_1}^{\bot}}}^{\bot}$ has parameters $[81, 13, 36]$ and the weight distribution is given in Example \ref{example3}. It gives  $2$-$(81, i, \lambda)$ designs with the following pairs $(i, \lambda):$
$$(60, 39825), (54, 102608), (48, 32994), (36, 140),$$ $$(57, 93366), (63, 18445), (51, 57375),(45, 9185),$$
which confirms  the results in Theorem \ref{parameter-2}.
\end{example}

\begin{theorem}\label{parameter-3}
Let $m\geq 3$ be an integer and $\mathcal{B}$ be the set of the supports of the codewords of ${\overline{{\mathcal{C}_2}^{\bot}}}^{\bot}$ with weight $i,$ where $A_i\neq 0.$ Then for $m\geq 3$ odd, ${\overline{{\mathcal{C}_2}^{\bot}}}^{\bot}$ gives  $2$-$(p^m, i, \lambda)$ designs for the following pairs:
\begin{itemize}
\item $(i, \lambda)=(p^m-p^{m-1}, (p^m-p^{m-1}-1)(p^{m-1}+1)).$
\item $(i, \lambda)=(p^m-p^{m-1}+p^{\frac{m-1}{2}}, \frac{1}{2}p^{\frac{m-1}{2}}(p^{\frac{m+1}{2}}-p^{\frac{m-1}{2}}+1)(p^m-p^{m-1}+p^{\frac{m-1}{2}}-1)).$
\item $(i, \lambda)=(p^m-p^{m-1}-p^{\frac{m-1}{2}}, \frac{1}{2}p^{\frac{m-1}{2}}(p^{\frac{m+1}{2}}-p^{\frac{m-1}{2}}-1)(p^m-p^{m-1}-p^{\frac{m-1}{2}}-1)).$
\end{itemize}
For $m\geq 4$ even, it holds $2$-$(p^m, i, \lambda)$ designs for the following pairs:
\begin{itemize}
\item $(i, \lambda)=(p^m-p^{m-1}, (p^m-p^{m-1}-1)(p^{m-1}-p^{m-2}+1)).$
\item $(i, \lambda)=(p^{\frac{m}{2}-1}(p-1)(p^{\frac{m}{2}}-(-1)^{\frac{m}{2}}), p^{\frac{m}{2}}(p^{\frac{m}{2}}-(-1)^{\frac{m}{2}})[p^{\frac{m}{2}-1}(p-1)(p^{\frac{m}{2}}-(-1)^{\frac{m}{2}})-1]/(p+1)).$
\item $(i, \lambda)=(p^{\frac{m}{2}-1}[p^{\frac{m}{2}}(p-1)+(-1)^{\frac{m}{2}}], p^{\frac{m}{2}}[p^{\frac{m}{2}}(p-1)+(-1)^{\frac{m}{2}}][p^{m-1}(p-1)+(-1)^{\frac{m}{2}}p^{\frac{m}{2}-1}-1]/(p+1)).$
\item $(i, \lambda)=(p^{\frac{m}{2}}(p-1)[p^{\frac{m}{2}-1}+(-1)^{\frac{m}{2}}], p^{\frac{m}{2}-2}(p^{\frac{m}{2}-1}+(-1)^{\frac{m}{2}}][p^{\frac{m}{2}}(p-1)(p^{\frac{m}{2}-1}+(-1)^{\frac{m}{2}})-1]/(p+1)).$
\item $(i, \lambda)=(p^{\frac{m}{2}}[p^{\frac{m}{2}-1}(p-1)-(-1)^{\frac{m}{2}}], p^{\frac{m}{2}-2}[p^{\frac{m}{2}-1}(p-1)-(-1)^{\frac{m}{2}}][p^{m-1}(p-1)-(-1)^{\frac{m}{2}}p^{\frac{m}{2}}-1]/(p+1)).$
\end{itemize}
\end{theorem}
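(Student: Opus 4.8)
The plan is to follow the proof of Theorem \ref{parameter-1} almost verbatim, substituting Tables \ref{5} and \ref{6} for Table \ref{3}. By Lemma \ref{affine invariant} and Lemma \ref{The dual of an affine-invariant code}, ${\overline{{\mathcal{C}_2}^{\bot}}}^{\bot}$ is affine-invariant, so Theorem \ref{2-design} (equivalently Theorem \ref{$2-$design-1}) already tells us that for each $i$ with $A_i\neq 0$ the supports of the weight-$i$ codewords form a $2$-$(p^m,i,\lambda)$ design. The weight $i=p^m$ must be discarded: the only codewords of that weight are the $p-1$ nonzero constant vectors, whose common support is all of $\mathcal{P}$, giving a trivial design. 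Apart from $i=0$ this leaves exactly the three weights of Table \ref{5} when $m$ is odd and the five weights of Table \ref{6} when $m$ is even, i.e.\ precisely the weights listed in the statement.

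The first substantive step is to count $b=|\mathcal{B}|$, the number of distinct supports of weight-$i$ codewords, using Theorem \ref{design parameter}. Let $d$ be the minimum distance of ${\overline{{\mathcal{C}_2}^{\bot}}}^{\bot}$, namely the smallest nonzero entry of Table \ref{5} (resp.\ Table \ref{6}); it equals $p^m-p^{m-1}-p^{(m-1)/2}$ for $m$ odd, and the corresponding smallest entry for $m$ even. Let $w$ be the integer of Theorem \ref{design parameter}. Since $w-\lfloor\frac{w+p-2}{p-1}\rfloor$ differs from $\frac{(p-2)(w-1)}{p-1}$ by less than $1$, the admissible $w$ may be taken roughly as large as $\frac{p-1}{p-2}d=d+\frac{d}{p-2}$; as $d$ is comparable to $p^m$, the quantity $\frac{d}{p-2}$ is of order $p^{m-1}$, which for $m\ge 3$ in the odd case and $m\ge 4$ in the even case dominates the spread between $d$ and the largest weight below $p^m$. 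Hence $w\ge i$ for every relevant $i$, and Theorem \ref{design parameter} then gives that two weight-$i$ codewords with the same support are scalar multiples of one another; so each support arises from exactly $p-1$ codewords and $b=A_i/(p-1)$. (In contrast to Theorem \ref{parameter-1}, here $d$ is comparable to $p^m$, so $w$ already exceeds every weight below $p^m$ even when $m=3$, which is why the odd case needs no further restriction.)

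Finally, taking $v=p^m$, $k=i$ and $b=A_i/(p-1)$ in identity (\ref{condition}) gives
\[
\lambda=\frac{b\binom{i}{2}}{\binom{p^m}{2}}=\frac{A_i\,i(i-1)}{(p-1)\,p^m(p^m-1)},
\]
and it remains only to insert the value of $A_i$ from Table \ref{5} or Table \ref{6} for each weight $i$ in the statement and simplify: in every case the factors $p^m$ and $p^m-1$ of $A_i$ cancel against the denominator, leaving the listed polynomial expression, and for $m$ even the sign $(-1)^{m/2}$ simply carries through the cancellation unchanged. The one genuinely non-mechanical step is the lower bound on $w$ in the preceding paragraph; it is most comfortable for small $p$ and tightest as $p\to\infty$, and it is precisely there that the hypotheses $m\ge 3$ (odd) and $m\ge 4$ (even) are needed --- for smaller $m$ the spread of the weights would overwhelm $d/(p-2)$ and Theorem \ref{design parameter} could no longer be applied to all the relevant weights simultaneously. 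Everything else is routine arithmetic with the tables, exactly as for Theorem \ref{parameter-1}.
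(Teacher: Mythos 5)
Your proposal is correct and follows essentially the same route as the paper, which likewise reduces Theorem \ref{parameter-3} to Theorem \ref{$2-$design-1}, the count $b=A_i/(p-1)$ via Theorem \ref{design parameter}, and identity (\ref{condition}). The only soft spot is your order-of-magnitude justification that $w$ exceeds every relevant weight: at $m=4$ the margin is genuinely tight (there $i_{\max}=p^4-p^2$ and $i_{\max}-\lceil i_{\max}/(p-1)\rceil=d-p^2$ exactly), so that step is better verified by direct computation for each weight than by the asymptotic comparison, though the conclusion does hold for all $m\ge 3$.
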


%\emph{Proof.} The proof is similar to that of Theorem \ref{parameter-1},  we omit it here. \hfill$\square$

\begin{example}\label{example-11}
If $(p,m)=(3,3)$, then the code ${\overline{{\mathcal{C}_2}^{\bot}}}^{\bot}$ has parameters $[27, 7, 15]$. It gives $2$-$(27, i, \lambda)$ designs with the following pairs $(i, \lambda):$
$$(15, 105), (18, 170), (21, 210),$$
which confirms  the results in Theorem \ref{parameter-3}.
\end{example}

\begin{example}\label{example-12}
If $(p,m)=(3,4)$, then the code ${\overline{{\mathcal{C}_2}^{\bot}}}^{\bot}$ has parameters $[81, 9, 45]$ and the weight distribution is given in Example \ref{example7}. It gives  $2$-$(81, i, \lambda)$ designs with the following pairs $(i, \lambda):$
$$(45, 55), (54, 1007), (48, 846), (57, 2394), (72, 71),$$
which confirms  the results in Theorem \ref{parameter-3}.
\end{example}

\section{Concluding remarks}\label{section-5}
In this paper, we first determined the weight distributions of two classes of linear codes derived from the duals of extended cyclic codes. Using the properties of affine-invariant codes, we then found that both ${\overline{{\mathcal{C}_1}^{\bot}}}^{\bot}$ and ${\overline{{\mathcal{C}_2}^{\bot}}}^{\bot}$  hold $2$-designs and explicitly determined their parameters. However, for ${\overline{{\mathcal{C}_1}^{\bot}}}^{\bot}$, it seems hard to determine the parameters of the  $2$-designs  derived from the supports of all codewords with weight $i=p^2(p-1), p(p^2-p-1), p^3, p(p^2-1)$ for $m=3,$  and weight $i=p^2(p^2-1)$ for $m=4$, respectively.
This may constitute a challenge for future work.

\section*{Appendix I}
\begin{table}
\begin{center}
\caption{The value distribution of $S(a,b,c)$ when $m$ is odd}\label{1}
\begin{tabular}{ll}
\hline\noalign{\smallskip}
Value  &  Multiplicity   \\
\noalign{\smallskip}
\hline\noalign{\smallskip}
$\sqrt{p^*}p^{\frac{m-1}{2}}, -\sqrt{p^*}p^{\frac{m-1}{2}}$  &  $ \frac{1}{2}p^{m+1}(p^m-p^{m-1}-p^{m-2}+1)(p^m-1)/(p^2-1)$ \\
$\zeta^j_p\sqrt{p^*}p^{\frac{m-1}{2}}$, for $ 1\leq j \leq p-1$  &  $ \frac{1}{2}p^{\frac{m+3}{2}}(p^{\frac{m-1}{2}}+(\frac{-j}{p}))(p^m-p^{m-1}-p^{m-2}+1)\frac{p^m-1}{p^2-1} $    \\
$ -\zeta^j_p\sqrt{p^*}p^{\frac{m-1}{2}}$, for $ 1\leq j \leq p-1 $  &  $ \frac{1}{2}p^{\frac{m+3}{2}}(p^{\frac{m-1}{2}}-(\frac{-j}{p}))(p^m-p^{m-1}-p^{m-2}+1)\frac{p^m-1}{p^2-1} $     \\
$ p^{\frac{m+1}{2}} $  &  $n_{1,1,0}=\frac{1}{2}p^{m-2}(p^{\frac{m-1}{2}}+1)(p^{\frac{m-1}{2}}+p-1)(p^m-1)$     \\
$  -p^{\frac{m+1}{2}}  $  &  $n_{-1,1,0}=\frac{1}{2}p^{m-2}(p^{\frac{m-1}{2}}-1)(p^{\frac{m-1}{2}}-p+1)(p^m-1) $     \\
$ \zeta^j_pp^{\frac{m+1}{2}}, -\zeta^j_pp^{\frac{m+1}{2}}$, for $ 1\leq j \leq p-1$  &  $ n_{\pm1,1,1}=\frac{1}{2}p^{m-2}(p^{m-1}-1)(p^m-1) $  \\
$\sqrt{p^*}p^{\frac{m+1}{2}}, -\sqrt{p^*}p^{\frac{m+1}{2}}$ &  $ n_{\pm1,2,0}=\frac{1}{2}p^{m-3}(p^{m-1}-1)(p^m-1)/(p^2-1)$  \\
$\zeta^j_p\sqrt{p^*}p^{\frac{m+1}{2}}$, for $ 1\leq j \leq p-1$ &  $ n_{1,2,1}=\frac{1}{2}p^{\frac{m-3}{2}}(p^{\frac{m-3}{2}}+(\frac{-j}{p}))(p^{m-1}-1)\frac{p^m-1}{p^2-1}$  \\
$-\zeta^j_p\sqrt{p^*}p^{\frac{m+1}{2}}$, for $ 1\leq j \leq p-1$ &  $ n_{-1,2,1}=\frac{1}{2}p^{\frac{m-3}{2}}(p^{\frac{m-3}{2}}-(\frac{-j}{p}))(p^{m-1}-1)\frac{p^m-1}{p^2-1} $  \\
$0$ &  $ w=(p^m-1)(p^{2m-1}-p^{2m-2}+p^{2m-3}-p^{m-2}+1) $  \\
$p^m$ &  $ n_{p^m}=1 $  \\
\noalign{\smallskip}
\hline
\end{tabular}
\end{center}
\end{table}

\begin{table}
\begin{center}
\caption{The value distribution of $S(a,b,c)$ when $m$ is even}\label{2}
\begin{tabular}{ll}
\hline\noalign{\smallskip}
Value  &  Multiplicity   \\
\noalign{\smallskip}
\hline\noalign{\smallskip}
$p^{\frac{m}{2}}$  &  $\frac{(p^{\frac{m}{2}}+p-1)(p^m-p^{m-1}-p^{m-2}+p^{\frac{m}{2}}-p^{\frac{m}{2}-1}+1)p^{\frac{m}{2}+1}(p^m-1)}{2(p^2-1)}$ \\
$-p^{\frac{m}{2}}$  &  $\frac{ p^{\frac{m}{2}+1}(p^{\frac{m}{2}}-p+1)(p^m-p^{m-1}-p^{m-2}-p^{\frac{m}{2}}+p^{\frac{m}{2}-1}+1)(p^m-1)}{2(p^2-1)}$\\
$ \zeta^j_p p^{\frac{m}{2}}$, for $ 1\leq j \leq p-1 $  &  $ \frac{p^{\frac{m}{2}+1}(p^{\frac{m}{2}}-1)(p^m-p^{m-1}-p^{m-2}+p^{\frac{m}{2}}-p^{\frac{m}{2}-1}+1)(p^m-1)}{2(p^2-1)} $     \\
$ -\zeta^j_p p^{\frac{m}{2}}$, for $ 1\leq j \leq p-1 $  &  $\frac{p^{\frac{m}{2}+1}(p^{\frac{m}{2}}+1)(p^m-p^{m-1}-p^{m-2}-p^{\frac{m}{2}}+p^{\frac{m}{2}-1}+1)(p^m-1)}{2(p^2-1)}$     \\
$  \sqrt{p^*}p^{\frac{m}{2}}, -\sqrt{p^*}p^{\frac{m}{2}} $  &  $n_{\pm1,1,0}=\frac{1}{2}p^{2m-3}(p^m-1) $     \\
$ \zeta^j_p\sqrt{p^*}p^{\frac{m}{2}}$, for $ 1\leq j \leq p-1$  &  $ n_{1,1,1}=\frac{1}{2}p^{\frac{3}{2}m-2}(p^{\frac{m}{2}-1}+(\frac{-j}{p}))(p^m-1) $     \\
$-\zeta^j_p\sqrt{p^*}p^{\frac{m}{2}}$, for $ 1\leq j \leq p-1$ &  $n_{-1,1,1}=\frac{1}{2}p^{\frac{3}{2}m-2}(p^{\frac{m}{2}-1}-(\frac{-j}{p}))(p^m-1)) $  \\
$p^{\frac{m}{2}+1}$ &  $\frac{1}{2}p^{\frac{m}{2}-2}(p^{\frac{m}{2}-1}+1)(p^{\frac{m}{2}}-1)(p^{\frac{m}{2}-1}+p-1)\frac{p^m-1}{p^2-1}$\\
$-p^{\frac{m}{2}+1}$ &  $ \frac{1}{2}p^{\frac{m}{2}-2}(p^{\frac{m}{2}-1}-1)(p^{\frac{m}{2}}+1)(p^{\frac{m}{2}-1}-p+1)\frac{p^m-1}{p^2-1}$  \\
$\zeta^j_pp^{\frac{m}{2}+1}$, for $ 1\leq j \leq p-1$ &  $\frac{1}{2}p^{\frac{m}{2}-2}(p^{\frac{m}{2}}-1)(p^{m-2}-1)(p^m-1)/(p^2-1) $ \\
$-\zeta^j_pp^{\frac{m}{2}+1}$, for $ 1\leq j \leq p-1$ &  $ \frac{1}{2}p^{\frac{m}{2}-2}(p^{\frac{m}{2}}+1)(p^{m-2}-1)(p^m-1)/(p^2-1)$\\
$0$ &  $ w=(p^m-1)(p^{2m-1}-p^{2m-2}+p^{2m-3}-p^{m-2}+1) $  \\
$p^m$ &  $n_{p^m}=1 $  \\
\noalign{\smallskip}
\hline
\end{tabular}
\end{center}
\end{table}

\section*{Appendix II}

%\emph{Proof of Theorem \ref{weight1}.}

\begin{proof}[Proof of Theorem \ref{weight1}]
For each nonzero codeword $\mathbf{c}(a,b,c,h)=(c_0, c_1,\ldots, c_n)$ in ${\overline{{\mathcal{C}_1}^{\bot}}}^{\bot},$ the Hamming weight of $\mathbf{c}(a,b,c,h)$ is
\begin{equation}\label{weight formula-1}
w_H(\mathbf{c}(a,b,c,h))=n+1-T(a,b,c,h)=p^m-T(a,b,c,h),
\end{equation}
where $$T(a,b,c,h)=|\{x: Tr(ax^{p^l+1}+bx^2+cx)+h=0,~ x,a,b,c \in \mathbb{F}_q, h\in \mathbb{F}_p\}|.$$
Then
\begin{eqnarray*}
T(a,b,c,h)&=&\frac{1}{p}\sum\limits_{y \in \mathbb{F}_p}\sum\limits_{x \in \mathbb{F}_q}\zeta_p^{y(Tr(ax^{p^l+1}+bx^2+cx)+h)}\\
&=&\frac{1}{p}\sum\limits_{y \in \mathbb{F}_p}\zeta_p^{yh}\sum\limits_{x \in \mathbb{F}_q}\zeta_p^{yTr(ax^{p^l+1}+bx^2+cx)}\\
&=&p^{m-1}+\frac{1}{p}\sum\limits_{y \in \mathbb{F}_p^*}\zeta_p^{yh}\sigma_y(S(a,b,c)).
\end{eqnarray*}
By Lemma \ref{value distribution}, for $\varepsilon=\pm 1, j \in \mathbb{F}_p^*$ and $0\leq i\leq 2,$ we have $\ell=\frac{m+i}{2}$ if $m-i$ is even, and $ \ell=\frac{m+i-1}{2}$ if $m-i$ is odd, and then
$$S(a,b,c)=\{\varepsilon p^\ell, \varepsilon \sqrt{p^*}p^\ell, 0 , \varepsilon p^\ell\zeta_p^j, \varepsilon \sqrt{p^*}\zeta_p^jp^\ell, p^m\}.$$
Hence from Lemma \ref{Cyclotomic fields}, we get
\begin{eqnarray*}
\sigma_y(S(a,b,c))=\left\{
\begin{array}{ll}
0 & \mathrm{if}\,\ S(a,b,c)=0 ,\\
\varepsilon p^\ell & \mathrm{if}\,\ S(a,b,c)=\varepsilon p^\ell ,\\
\varepsilon p^\ell \sqrt{p^*}\eta'(y) & \mathrm{if}\,\ S(a,b,c)=\varepsilon p^\ell \sqrt{p^*} ,\\
\varepsilon p^\ell \zeta_p^{yj} & \mathrm{if}\,\ S(a,b,c)=\varepsilon p^\ell\zeta_p^j ,\\
\varepsilon p^\ell \sqrt{p^*}\eta'(y) \zeta_p^{yj}& \mathrm{if}\,\ S(a,b,c)=\varepsilon\sqrt{p^*}p^\ell\zeta_p^j ,\\
p^m & \mathrm{if}\,\ S(a,b,c)=p^m.
\end{array}
\right.
\end{eqnarray*}
That is,
\begin{eqnarray*}
T(a,b,c,h)=\left\{
\begin{array}{ll}
p^{m-1} & \mathrm{if}\,\ S(a,b,c)=0 ,\\
p^{m-1}+\varepsilon p^{\ell -1}(p-1) & \mathrm{if}\,\ S(a,b,c)=\varepsilon p^\ell \, \mathrm{and}\,\ h=0,\\
p^{m-1}+\varepsilon p^{\ell -1}(-1) & \mathrm{if}\,\ S(a,b,c) =\varepsilon p^\ell\, \mathrm{and}\,\ h\neq0 ,\\
p^{m-1}+\varepsilon p^{\ell -1}\sqrt{p^*}\eta'(y)G(\eta', \chi'_1) & \mathrm{if}\,\ S(a,b,c)=\varepsilon \sqrt{p^*}p^\ell ,\\
p^{m-1}+\varepsilon p^{\ell -1}(p-1)& \mathrm{if}\,\ S(a,b,c)=\varepsilon p^\ell \zeta_p^j\, \mathrm{and}\,\ h+j=0, \\
p^{m-1}+\varepsilon p^{\ell -1}(-1)& \mathrm{if}\,\ S(a,b,c)=\varepsilon p^\ell \zeta_p^j \, \mathrm{and}\,\ h+j\neq 0, \\
p^{m-1}+\varepsilon p^{\ell -1}\sqrt{p^*}\eta(h+j)G(\eta',\chi'_1) & \mathrm{if}\,\ S(a,b,c)=\varepsilon \sqrt{p^*}p^\ell\zeta_p^j ,\\
p^m& \mathrm{if}\,\ S(a,b,c)=p^m \, \mathrm{and}\,\ h=0 ,\\
0& \mathrm{if}\,\ S(a,b,c)=p^m\, \mathrm{and}\,\ h \neq 0 .
\end{array}
\right.
\end{eqnarray*}
Obviously, when $m$ is odd, by Lemmas \ref{Gauss}, \ref{Qu-Fea}, \ref{value distribution} and Eq.(\ref{weight formula-1}) we have

$w_1=p^m-p^{m-1},$

$A_{w_1}=pw+2n_{1,0,0}+2n_{1,2,0}+(p-1)[(n_{1,0,1}+n_{-1,0,1})+(n_{1,2,1}+n_{-1,2,1})],$

$w_2=p^m-[p^{m-1}+p^{\frac{m-1}{2}}(p-1)],$

$A_{w_2}=n_{1,1,0}+(p-1)n_{1,1,1},$

$w_3=p^m-[p^{m-1}-p^{\frac{m-1}{2}}(p-1)],$

$A_{w_3}=n_{-1,1,0}+(p-1)n_{-1,1,1},$

$w_4=p^m-(p^{m-1}-p^{\frac{m-1}{2}}),$

$A_{w_4}=(p-1)n_{1,1,0}+(p-1)^2n_{1,1,1}+{\frac{p-1}{2}}(n_{1,0,0}+n_{-1,0,0})+{\frac{p-1}{2}}{(p-1)}(n_{1,0,1}+n_{-1,0,1}),$

$w_5=p^m-(p^{m-1}+p^{\frac{m-1}{2}}),$

$A_{w_5}=(p-1)n_{-1,1,0}+(p-1)^2n_{-1,1,1}+2\cdot{\frac{p-1}{2}}n_{\pm1,0,0}+2\cdot{\frac{(p-1)^2}{2}}n_{\pm1,0,1},$

$w_6=p^m-(p^{m-1}-p^{\frac{m+1}{2}}),$

$A_{w_6}=2\cdot{\frac{p-1}{2}}n_{\pm1,2,0}+2\cdot{\frac{(p-1)^2}{2}}n_{\pm1,2,1},$

$w_7=p^m-(p^{m-1}+p^{\frac{m+1}{2}}),$

$A_{w_7}=A_{w_6},$

$A_{p^m}=p-1.$\\
When $m$ is even, by Lemmas \ref{Gauss}, \ref{Qu-Fea}, \ref{value distribution} and Eq.(\ref{weight formula-1}) we get

$w_1=p^m-p^{m-1},$

$A_{w_1}=pw+2n_{\pm1,1,0}+2(p-1)n_{\pm1,1,1},$

$w_2=p^m-[p^{m-1}+p^{\frac{m}{2}-1}(p-1)],$

$A_{w_2}=n_{1,0,0}+(p-1)n_{1,0,1},$

$w_3=p^m-[p^{m-1}+p^{\frac{m}{2}}(p-1)],$

$A_{w_3}=n_{1,2,0}+(p-1)n_{1,2,1},$

$w_4=p^m-(p^{m-1}-p^{\frac{m}{2}-1}(p-1)),$

$A_{w_4}=n_{-1,0,0}+(p-1)n_{-1,0,1},$

$w_5=p^m-[p^{m-1}-p^{\frac{m}{2}}(p-1)],$

$A_{w_5}=n_{-1,2,0}+(p-1)n_{-1,2,1},$

$w_6=p^m-[p^{m-1}-p^{\frac{m}{2}}(p-1)],$

$A_{w_6}=(p-1)n_{1,0,0}+(p-1)^2n_{1,0,1},$

$w_7=p^m-(p^{m-1}-p^{\frac{m}{2}}),$

$A_{w_7}=(p-1)n_{1,2,0}+(p-1)^2n_{1,2,1}+2\cdot{\frac{p-1}{2}}n_{\pm1,1,0}+2\cdot{\frac{(p-1)^2}{2}}n_{\pm1,1,1},$

$w_8=p^m-(p^{m-1}+p^{\frac{m}{2}-1}),$

$A_{w_8}=(p-1)n_{-1,0,0}+(p-1)^2n_{-,0,1},$

$w_9=p^m-(p^{m-1}+p^{\frac{m}{2}}),$

$A_{w_9}=(p-1)n_{-1,2,0}+(p-1)^2n_{-1,2,1}+2\cdot{\frac{p-1}{2}}n_{\pm1,1,0}+2\cdot{\frac{(p-1)^2}{2}}n_{\pm1,1,1},$

$A_{p^m}=p-1.$\\
Thus we complete the proof of Theorem \ref{weight1}.
\end{proof}
%\hfill$\square$

%\emph{Proof of Theorem \ref{weight2}.}

\begin{proof}[Proof of Theorem \ref{weight2}]
For each nonzero codeword $\mathbf{c}(a,b,h)=(c_0,\ldots,c_n)$ in ${\overline{{\mathcal{C}_2}^{\bot}}}^{\bot},$ the Hamming weight of $\mathbf{c}(a,b,h)$ is
\begin{equation}\label{weight formula-2}
w_H(\mathbf{c}(a,b,h))=p^m-T(a,b,h),
\end{equation}
where $$T(a,b,h)=|\{x:Tr(ax^{p^l+1}+bx)+h=0,~ x,a,b \in \mathbb{F}_q, h \in \mathbb{F}_p\}|.$$
Then
\begin{eqnarray*}
T(a,b,h)&=&{\frac{1}{p}}\sum\limits_{y \in \mathbb{F}_p}\sum\limits_{x \in \mathbb{F}_q}\zeta_p^{yTr(ax^{p^l+1}+bx)+hy}\\
&=&p^{m-1}+{\frac{1}{p}}\sum\limits_{y\in \mathbb{F}_p^*}\zeta_p^{hy}\sum\limits_{x \in \mathbb{F}_q}\zeta_p^{yTr(ax^{p^l+1}+bx)}.
\end{eqnarray*}
If $a=b=h=0$, then $\mathbf{c}(a,b,h)$ is the zero codeword.\\
If $a=b=0, h\neq 0$, then  $T(a,b,h)=p^{m-1}+p^{m-1}\sum\limits_{y \in \mathbb{F}_p^*}\zeta_p^{hy}=0.$\\
If $a=0, b\neq 0$, then $ T(a,b,h)=p^{m-1}+{\frac{1}{p}}\sum\limits_{y \in \mathbb{F}_p^*}\zeta_p^{hy}\sum\limits_{x\in \mathbb{F}_q}\zeta_p^{yTr(bx)}=p^{m-1}.$\\
If $a\neq 0$, then
\begin{eqnarray*}
T(a,b,h)&=&p^{m-1}+{\frac{1}{p}}\sum\limits_{y \in \mathbb{F}_p^*}\zeta_p^{hy}\sigma_y(\sum\limits_{x\in \mathbb{F}_q}\zeta_p^{Tr(ax^{p^l+1}+bx)})\\
&=&p^{m-1}+{\frac{1}{p}}\sum\limits_{y \in \mathbb{F}_p^*}\zeta_p^{hy}\sigma_y(S(a,b))\\
&=&p^{m-1}+{\frac{1}{p}}\sum\limits_{y \in \mathbb{F}_p^*}\zeta_p^{hy}S(ay,by).
\end{eqnarray*}

When $m$ is odd, by Lemmas \ref{Gauss}-\ref{polynomial}, we have
\begin{eqnarray*}
T(a,b,h)=\left\{
\begin{array}{ll}
p^{m-1} & \mathrm{if}\,\ h=Tr(ax_{a,b}^{p^l+1}),\\
p^{m-1}+p^{\frac{m-1}{2}}(-1)^\frac{(p-1)(m+1)}{4}\eta(a)\eta'(h-Tr(ax_{a,b}^{p^l+1})) & \mathrm{if}\,\ h\neq Tr(ax_{a,b}^{p^l+1}).\\
\end{array}
\right.
\end{eqnarray*}
Obviously, for $ a\in \mathbb{F}_q^*,$ we get that  $ T(a,b,h)=p^{m-1}$ appears $p^m(p^m-1)$ times, both $T(a,b,h)=p^{m-1}+p^{\frac{m-1}{2}}$ and $ T(a,b,h)=p^{m-1}-p^{\frac{m-1}{2}}$  appear $\frac{p^m(p-1)(p^m-1)}{2}$ times, respectively.

When $m$ is even and $a^{\frac{q-1}{p+1}}\neq (-1)^{\frac{m}{2}},$ by Lemma \ref{polynomial}, we have
\begin{eqnarray*}
T(a,b,h)=\left\{
\begin{array}{ll}
p^{m-1}+(-1)^{\frac{m}{2}}p^{\frac{m}{2}-1}(p-1) & \mathrm{if}\,\ h=Tr(ax_{a,b}^{p^l+1}),\\
p^{m-1}-(-1)^{\frac{m}{2}}p^{\frac{m}{2}-1} & \mathrm{if}\,\  h\neq Tr(ax_{a,b}^{p^l+1})\,.
\end{array}
\right.
\end{eqnarray*}
Clearly, there exist $p^m-1-{\frac{p^m-1}{p+1}}={\frac{p(p^m-1)}{p+1}}$ elements $a\in \mathbb{F}_q^*$ such that
$a^{\frac{q-1}{p+1}}\neq (-1)^{\frac{m}{2}}.$ Then
$$T(a,b,h)=p^{m-1}+(-1)^{\frac{m}{2}}p^{\frac{m}{2}-1}(p-1)$$
appears ${\frac{p^{m+1}(p^m-1)}{p+1}}$ times, and
$$T(a,b,h)=p^{m-1}-(-1)^{\frac{m}{2}}p^{\frac{m}{2}-1}$$
appears ${\frac{p^{m+1}(p-1)(p^m-1)}{p+1}}$ times.

When $m$ is even and $a^{\frac{q-1}{p+1}}=(-1)^{\frac{m}{2}},$ from Lemma \ref{polynomial}, we get
\begin{eqnarray*}
T(a,b,h)=\left\{
\begin{array}{ll}
p^{m-1}-(-1)^{\frac{m}{2}}p^{\frac{m}{2}}(p-1) & \mathrm{if}\,\ f(x)=-b^{p^l}\, \ \mathrm{is~ solvable}\, \mathrm{and}\,\ h=Tr(ax_{a,b}^{p^l+1}),\, \\
p^{m-1}+(-1)^{\frac{m}{2}}p^{\frac{m}{2}} & \mathrm{if}\,\ f(x)=-b^{p^l}\, \ \mathrm{is ~solvable},\,\ b\neq 0\,\mathrm{and}\\&\ h\neq Tr(ax_{a,b}^{p^l+1})\ ,\ \mathrm{or}\,\ b=0\,\mathrm{and}\,\ h\neq 0,\\
p^{m-1}  & \mathrm{if}\,\ f(x)=-b^{p^l}\, \ \mathrm{is ~ no~ solvable}.
\end{array}
\right.
\end{eqnarray*}
By Lemma \ref{solution}, there are $\frac{q-1}{p+1}$ elements $a\in \mathbb{F}_q^*$ such that $a^{\frac{q-1}{p+1}}=(-1)^{\frac{m}{2}},$ and $p^{m-2}$ elements $b\in \mathbb{F}_q$ such that $f(x)=-b^{p^l}$ is solvable. Therefore,
$$T(a,b,h)=p^{m-1}+(-1)^{\frac{m}{2}+1}p^{\frac{m}{2}}(p-1)$$
 appears ${\frac{p^{m-2}(p^m-1)}{p+1}}$ times,
 $$T(a,b,h)=p^{m-1}+(-1)^{\frac{m}{2}}p^{\frac{m}{2}}$$
 appears ${\frac{p^{m-2}(p^m-1)(p-1)}{p+1}}$ times, and $T(a,b,h)=p^{m-1}$ appears $(p^m-1)p^{m-1}(p-1)$ times.

By all the discussions above, the proof is completed.
\end{proof}
%\hfill$\square$

%\bibliographystyle{abbrv}

%\bibliography{designspary}

\end{document}